\definecolor{darkblue}{rgb}{0,0.1,0.5}
\DeclareMathAlphabet{\mathcal}{OMS}{cmsy}{m}{n}    
\newcommand{\fsp}[1]{\mathcal{#1}}       
\newcommand{\Qfs}{\fsp{Q}}      
\newcommand{\solver}{\mathcal{S}}
\newcommand{\prob}{\mathcal{P}}
\newcommand{\funl}[1]{\mathscr{#1}}      
\newcommand{\Fun}{\funl{F}}     
\DeclareMathAlphabet{\mathpzc}{OT1}{pzc}{m}{it}   
\newcommand{\setrn}[1]{\mathpzc{#1}}     
\newcommand{\ball}{\setrn{B}}
\newcommand{\IntS}{\setrn{S}}
\newcommand{\algo}[1]{\mbox{\texttt{#1}}}      
\newcommand{\dfo}{\algo{DFO}}
\newcommand{\newuoa}{\algo{NEWUOA}}
\newcommand{\uobyqa}{\algo{UOBYQA}}
\newcommand{\mnh}{\algo{MNH}}
\newcommand{\condor}{\algo{CONDOR}}
\newcommand{\cobyla}{\algo{COBYLA}}
\newcommand{\orbit}{\algo{ORBIT}}
\newcommand{\boosters}{\algo{BOOSTERS}}
\newcommand{\ESYMBS}{\algo{ESYMBS}}
\newcommand{\ESYMBP}{\algo{ESYMBP}}
\newcommand{\Real}{\mathbb{R}}
\newcommand{\Tran}[1]{#1^\mathrm{T}}
\newcommand{\pinv}[1]{#1^\dagger}
\newcommand{\Sob}[2]{H^#2}
\newcommand{\md}{\,\mathrm{d}}
\newcommand{\radius}{r}
\newcommand{\mean}{\mathrm{mean}}
\newcommand{\std}{\mathrm{std}}
\newcommand{\rstd}{\mathrm{rstd}}
\begin{document}

\title{Sobolev seminorm of quadratic functions with applications to 
derivative-free optimization
\thanks{Partially supported by Chinese NSF grants 10831006, 11021101, 
and CAS grant kjcx-yw-s7.}
}


\author{Zaikun Zhang}


\institute{Zaikun Zhang 
\at{Institute of Computational Mathematics and Scientific/Engineering 
Computing, Chinese Academy of Sciences, P.O. Box 2719, Beijing 100190, CHINA} \\
\email{zhangzk@lsec.cc.ac.cn}           
}

\date{Received: date / Accepted: date}

\maketitle

\begin{abstract}
  This paper studies the $\Sob{2}{1}$ Sobolev seminorm of quadratic functions. The research is motivated by the least-norm interpolation that is widely used in derivative-free optimization. We express the $\Sob{2}{1}$ seminorm of a quadratic function explicitly in terms of the Hessian and the gradient when the underlying domain is a ball. The seminorm gives new insights into least-norm interpolation. It clarifies the analytical and geometrical meaning of the objective function in least-norm interpolation. We employ the seminorm to study the extended symmetric Broyden update proposed by Powell. Numerical results show that the new thoery helps improve the performance of the update. Apart from the theoretical results, we propose a new method of comparing derivative-free solvers, which is more convincing than merely counting the numbers of function evaluations. 

\keywords{Sobolev seminorm \and Least-norm interpolation \and Derivative-free optimization \and Extended symmetric Broyden update}
 
 \subclass{90C56 \and 90C30 \and 65K05}
\end{abstract}

\section{Motivation and Introduction}
\label{motint}
Consider an unconstrained derivative-free optimization problem
\begin{equation}
\min_{x\in\Real^n}F(x),
  \label{dfoprob}
\end{equation}
where $F$ is a real-valued function whose derivatives are unavailable. Problems of this type have numerous applications. For instance, they have been employed to solve the helicopter rotor blade design problem \cite{booker1998managing,booker1998optimization,booker1999rigorous}, the ground water community problems \cite{hemker2006derivative,fowler2008comparison}, and the problems in biomedical imaging \cite{oeuvray2005trust,oeuvray2007new}. 

Many algorithms have been developed for problem (\ref{dfoprob}). For example, direct search methods (see Wright \cite{wright1996direct}, Powell \cite{powell1998direct}, Lewis, Torczon, and Trosset \cite{lewis2000direct}, and Kolda, Lewis, and Torczon \cite{kolda2003optimization} for reviews), line search methods without derivatives (see Stewart \cite{stewart1967modification} for a quasi-Newton method using finite difference; see Gilmore and Kelley \cite{gilmore1995implicit}, Choi and Kelley \cite{choi2000superlinear}, and Kelley \cite{kelley2002brief,kelley2011implicit} for Implicit Filtering method, a hybrid of quasi-Newton and grid-based search; see Diniz-Ehrhardt, Mart{\'\i}nez, and Rayd{\'a}n \cite{diniz2008derivative} for a derivative-free line search technique), and model-based methods (for instance, a method by Winfield \cite{winfield1973function}, \cobyla~by Powell \cite{COBYLA}, \dfo~by Conn, Scheinberg, and Toint \cite{CoScTo97a,Conn97recentprogress,Conn98DFOPractice}, \uobyqa~by Powell \cite{Powell2000uobyqa}, a wedge trust region method by Marazzi and Nocedal \cite{marazzi2002wedge}, \newuoa~and extensions by Powell \cite{Powell2004newuoa,Powell2008NEWUOAdev,powell2009bobyqa,powell2012beyond}, \condor~by Vanden Berghen and Bersini \cite{berghen2005condor}, \boosters~by Oeuvray and Bierlaire \cite{oeuvray2008derivative}, \orbit~by Wild, Regis, and Shoemaker \cite{wild2008orbit}, \mnh~by Wild \cite{SMW08MNH}, and a recent algorithm using sparse 
low degree model by Bandeira, Scheinberg, and Vicente \cite{bandeira2010computation}).
We refer the readers to the book by Brent \cite{brent1973algorithms}, the one by Kelley \cite{kelley1999iterative}, and the one by Conn, Scheinberg, and Vicente \cite{Conn:2009:IntroDFO} for extensive discussions and the references therein. 

Multivariate interpolation has been acting as a powerful tool in the design of derivative-free optimization methods. 
The following quadratic interpolation plays an important role in Conn and Toint \cite{CoTo96a}, Conn, Scheinberg, and Toint \cite{CoScTo97a,Conn97recentprogress,Conn98DFOPractice}, Powell \cite{PowellLFNU,Powell2004newuoa,Powell2008NEWUOAdev,powell2009bobyqa,powell2012beyond}, and Cust{\'o}dio, Rocha, and Vicente \cite{custodio2010incorporating}:
\begin{equation}
  \begin{split}
    \min_{Q\in\Qfs}&~\|\nabla^2Q\|_\mathrm{F}^2 + \sigma \|\nabla Q(x_0)\|_2^2\\
  \mathrm{s.t.}&~Q(x) = f(x),~~x\in \IntS,
  \label{lnint}
\end{split}
\end{equation}
where $\Qfs$ is the linear space of polynomials with degree at most two in $n$ variables, $x_0$ is a specific point in $\Real^n$, $\sigma$ is a nonnegative constant, $\IntS$ is a finite set of interpolation points in $\Real^n$, and $f$ is a function\footnote{Notice that $f$ is not always equal to the objective function $F$, which is the case in Powell \cite{PowellLFNU,Powell2004newuoa,Powell2008NEWUOAdev,powell2009bobyqa,powell2012beyond}. See Section \ref{lni} for details.} on $\IntS$. We henceforth refer to this type of interpolation as least-norm interpolation. 


The objective function of problem \textnormal{(\ref{lnint})} is interesting. It is easy to handle, and practice has proved it successful. The main purpose of this paper is to provide a new interpretation of it. Our tool is the $\Sob{2}{1}$ Sobolev seminorm, which is classical in PDE theory but may be rarely noticed in nonlinear optimization. We will give new insights into some basic questions about the objective function in (\ref{lnint}). For example, \emph{what is the exact analytical and geometrical meaning of this objective}? Why to combine the Frobenius norm of the Hessian and the $2$-norm of the gradient? What is the meaning of the parameters $x_0$ and $\sigma$? 

This paper is organized as follows. Section \ref{lni} introduces the applications of least-norm interpolation in derivative-free optimization. Section \ref{results} presents the main theoretical results. We first study the $\Sob{2}{1}$ seminorm of quadratic functions, and then employ the $\Sob{2}{1}$ seminorm to investigate least-norm interpolation. The questions listed above are answered in Section \ref{results}. Section \ref{ESB} applies the theory obtained in Section \ref{results} to study the extended symmetric Broyden update proposed by Powell \cite{powell2012beyond}. We show an easy and effective way to improve the performance of the update. Section \ref{secconclusion} concludes our discussion.

The main contributions of the paper lie in Section \ref{results} and Section \ref{ESB}. Besides the theoretical results, the numerical experience in Section \ref{ESB} is also a highlight. 
We propose a new method of testing derivative-free solvers by introducing statistics into the numerical experiments. See Subsection \ref{numerical} for details.

A remark on notation. We use the notation 
\begin{equation}
  \begin{split}
    \min_{x\in \setrn{X}}&~\psi(x)\\
    \textnormal{s.t.}&~\min_{x\in \setrn{X}} \phi(x)
  \end{split}
  \label{bilevel}
\end{equation}
for the bilevel programming problem 
\begin{equation}
  \begin{split}
    \min&~\psi(x)\\
    \textnormal{s.t.}&~x\in[\arg\min_{\xi\in \setrn{X}}\phi(\xi)].
  \end{split}
  \label{bilevelprog}
\end{equation}
\section{Least-Norm Interpolation in Derivative-Free Optimization}\label{lni}
Least-norm interpolation has successful applications in derivative-free optimization, especially in model-based algorithms. These algorithms typically follow trust-region methodology \cite{conn2000tr,powell2003trust}. On each iteration, a local model of the objective function is constructed, and then it is minimized within a trust-region to generate a trial step. The model is usually a quadratic polynomial constructed by solving an interpolation problem
\begin{equation}
  Q(x) = F(x),~~x\in\IntS,
  \label{int}
\end{equation}
where, as stated in problem (\ref{dfoprob}), $F$ is the objective function, and $\IntS$ is an interpolation set.  To determine a unique quadratic polynomial by problem (\ref{int}), the size of $\IntS$ needs to be at least $(n+1)(n+2)/2$, which is prohibitive when $n$ is big. Thus we need to consider underdetermined quadratic interpolation. In that case, a classical strategy to take up the remaining freedom is to minimize some functional subject to the  interpolation constraints, that is to solve 

\begin{equation}
  \begin{split}
    \min_{Q\in\Qfs}&~\Fun(Q)\\
  \mathrm{s.t.}&~Q(x) = F(x),~~x\in \IntS,
  \label{lfint}
  \end{split}
\end{equation}
$\Fun$ being some functional on $\Qfs$. Several existing choices of $\Fun$ lead to least-norm interpolation, directly or indirectly. Here we give some examples.

Conn and Toint \cite{CoTo96a} suggests the quadratic model that solves
\begin{equation}
  \begin{split}
    \min_{Q\in\Qfs}&~\|\nabla^2Q\|_\mathrm{F}^2 + \|\nabla Q(x_0)\|_2^2\\
  \mathrm{s.t.}&~Q(x) = F(x),~~x\in \IntS,
  \label{lnintCT}
\end{split}
\end{equation}
where $x_0$ is a specific point in $\Real^n$ (the center of the
trust-region they use). Problem (\ref{lnintCT}) is a least-norm interpolation problem with $\sigma=1$. 

Conn, Scheinberg, and Toint \cite{CoScTo97a,Conn97recentprogress,Conn98DFOPractice} builds a quadratic model by solving
\begin{equation}
  \begin{split}
    \min_{Q\in\Qfs}&~\|\nabla^2Q\|_\mathrm{F}^2\\
  \mathrm{s.t.}&~Q(x) = F(x),~~x\in \IntS.
  \label{lnintCST}
\end{split}
\end{equation}
It is considered as the best general strategy for the \dfo~algorithm to select a unique model from the pool of the possible interpolation models \cite{Conn98DFOPractice}. Wild \cite{SMW08MNH} also works with the model defined by (\ref{lnintCST}). Problem (\ref{lnintCST}) is a least-norm interpolation problem with $\sigma=0$. Conn, Scheinberg, and Vicente \cite{Conn:2009:IntroDFO} explains the motivation for (\ref{lnintCST}) by the following error bounds of quadratic interpolants.
\begin{theorem}\textnormal{\cite{Conn:2009:IntroDFO}}\label{thCSV}
  Suppose that the interpolation set $\IntS=\left\{y_0,~y_1,~\dots,~y_m\right\}$ ($m\ge n$) is contained in a ball $\ball(y_0,\radius)$ ($\radius>0$), and the matrix
  \begin{equation}
    L = \frac{1}{\radius}(y_1-y_0~ \cdots ~y_m-y_0)
  \end{equation}
  has rank $n$. If $F$ is continuously differentiable on $\ball(y_0,\radius)$, and $\nabla F$ is Lipschitz continuous on $\ball(y_0,\radius)$ with constant $\nu>0$, then for any quadratic function $Q$ satisfying the interpolation constraints (\ref{int}), it holds that
  \begin{equation}
  \|\nabla Q(x)-\nabla F(x)\|_2 \le \frac{5\sqrt{m}}{2}\|\pinv{L}\|_2(\nu + \|\nabla^2 Q\|_2)\radius,~~\forall x\in\ball(y_0,\radius),
    \label{ebg}
  \end{equation}
  and
  \begin{equation}
    |Q(x)- F(x)| \le \left(\frac{5\sqrt{m}}{2}\|\pinv{L}\|_2+\frac{1}{2}\right)(\nu + \|\nabla^2 Q\|_2)\radius^2, ~~ \forall x\in\ball(y_0,\radius).
    \label{ebf}
  \end{equation}
\end{theorem}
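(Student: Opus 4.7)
The plan is to reduce both bounds to a single anchored estimate of $\nabla Q(y_0)-\nabla F(y_0)$, extracted from the interpolation constraints together with the Lipschitz property of $\nabla F$, and then to propagate that estimate through the ball $\ball(y_0,\radius)$.

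For each $i=1,\dots,m$, the identity $Q(y_i)-Q(y_0)=F(y_i)-F(y_0)$ holds because $Q$ interpolates $F$ at every $y_j$. Expanding the left-hand side exactly (since $Q$ is quadratic) and the right-hand side through the fundamental theorem of calculus, then subtracting and using the $\nu$-Lipschitz bound on $\nabla F$ together with $\|y_i-y_0\|_2\le\radius$, yields
\begin{equation*}
 \bigl|\Tran{[\nabla Q(y_0)-\nabla F(y_0)]}(y_i-y_0)\bigr|\le \tfrac12(\nu+\|\nabla^2 Q\|_2)\radius^2
\end{equation*}
for every $i$. Rescaling by $1/\radius$ shows that each entry of $\Tran{L}[\nabla Q(y_0)-\nabla F(y_0)]$ is bounded in absolute value by $\tfrac12(\nu+\|\nabla^2 Q\|_2)\radius$, whence its $\ell_2$-norm is at most $\tfrac{\sqrt m}{2}(\nu+\|\nabla^2 Q\|_2)\radius$. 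Because $L$ has rank $n$, premultiplying by $\pinv{(\Tran{L})}$ and using $\|\pinv{(\Tran{L})}\|_2=\|\pinv{L}\|_2$ gives the anchored gradient estimate
\begin{equation*}
 \|\nabla Q(y_0)-\nabla F(y_0)\|_2 \le \tfrac{\sqrt m}{2}\|\pinv{L}\|_2(\nu+\|\nabla^2 Q\|_2)\radius.
\end{equation*}

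For (\ref{ebg}) at arbitrary $x\in\ball(y_0,\radius)$, I telescope
$\nabla Q(x)-\nabla F(x)=[\nabla Q(x)-\nabla Q(y_0)]+[\nabla Q(y_0)-\nabla F(y_0)]+[\nabla F(y_0)-\nabla F(x)]$; the outer two terms contribute at most $\|\nabla^2 Q\|_2\radius$ and $\nu\radius$, and these additive corrections can be absorbed into the leading multiplicative term because $\sqrt m\,\|\pinv{L}\|_2\ge 1$, which follows from $\sigma_{\max}(L)\le\sqrt m$ (the columns of $L$ having Euclidean norm at most $1$). For (\ref{ebf}), I use $Q(y_0)=F(y_0)$ and expand $Q(x)-F(x)$ at $y_0$ by the same quadratic-plus-Taylor-remainder argument: the linear term is controlled by the anchored gradient bound just established, while the quadratic remainders from $Q$ and from the Lipschitz error of $F$ contribute $\tfrac12\|\nabla^2 Q\|_2\radius^2$ and $\tfrac12\nu\radius^2$ respectively, combining into the stated $\tfrac12(\nu+\|\nabla^2 Q\|_2)\radius^2$ additive term.

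Structurally the argument is routine calculus and linear algebra; the main obstacle is the careful bookkeeping of constants, in particular preserving the $\sqrt m$ from the $\ell_\infty$-to-$\ell_2$ passage without loss, and repackaging the off-anchor corrections so that $\nu$ and $\|\nabla^2 Q\|_2$ recombine into the single factor $(\nu+\|\nabla^2 Q\|_2)$ that appears in both (\ref{ebg}) and (\ref{ebf}).
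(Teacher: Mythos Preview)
The paper does not prove this theorem: it is quoted from Conn, Scheinberg, and Vicente \cite{Conn:2009:IntroDFO} purely as background motivation for why minimizing a norm of $\nabla^2 Q$ is reasonable, and no argument is supplied in the text. So there is nothing to compare your proposal against within the paper itself.

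Your argument is correct and is essentially the standard one from \cite{Conn:2009:IntroDFO}. The anchored estimate
\[
\|\nabla Q(y_0)-\nabla F(y_0)\|_2\le \tfrac{\sqrt m}{2}\,\|\pinv{L}\|_2\,(\nu+\|\nabla^2 Q\|_2)\,\radius
\]
follows exactly as you describe, and the propagation to general $x\in\ball(y_0,\radius)$ via the triangle inequality and the observation $\sqrt m\,\|\pinv{L}\|_2\ge 1$ is sound. In fact your bookkeeping yields the sharper constants $\tfrac{3\sqrt m}{2}$ in (\ref{ebg}) and $\tfrac{\sqrt m}{2}+\tfrac12$ in (\ref{ebf}); the constants $\tfrac{5\sqrt m}{2}$ stated in the theorem are simply those recorded in \cite{Conn:2009:IntroDFO}, and your estimates imply them. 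One small presentational point: when you invoke $\pinv{(\Tran{L})}$, it is worth saying explicitly that $\Tran{L}$ has full column rank $n$, so that $\pinv{(\Tran{L})}\Tran{L}=I_n$ and the premultiplication genuinely recovers $\nabla Q(y_0)-\nabla F(y_0)$.
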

In light of Theorem \ref{thCSV}, minimizing some norm of $\nabla^2Q$ will help improve the approximation of gradient and function value. Notice that $\nabla^2Q$ appears in (\ref{ebg}--\ref{ebf}) with 2-norm rather than Frobenius norm.

Powell \cite{PowellLFNU,Powell2004newuoa,Powell2008NEWUOAdev} introduce the symmetric Broyden update to derivative-free optimization, and it is proposed to solve
\begin{equation}
  \begin{split}
    \min_{Q\in\Qfs}&~\|\nabla^2Q-\nabla^2Q_0\|_\mathrm{F}^2\\
  \mathrm{s.t.}&~Q(x) = F(x),~~x\in \IntS
  \label{lnintP1}
\end{split}
\end{equation}
to obtain a model for the current iteration, provided that $Q_0$ is the quadratic model used in the previous iteration. 
Problem (\ref{lnintP1}) is essentially a least-norm interpolation problem about \mbox{$Q-Q_0$}. 
The symmetric Broyden update is motivated by least change secant updates 
\cite{dennis1979least} in quasi-Newton methods. One particularly 
interesting advantage of the update is that, when $F$ is a quadratic function, 
$\nabla^2Q_+$ approximates $\nabla^2F$ better than $\nabla^2Q_0$ unless $\nabla^2Q_+=\nabla^2Q_0$, $Q_+$ being the model obtained by the update \cite{PowellLFNU,Powell2004newuoa,Powell2008NEWUOAdev}.

  
  Powell \cite{powell2012beyond} proposes the extended symmetric Broyden update by adding a first-order term to the objective function of problem (\ref{lnintP1}), resulting in 
\begin{equation}
  \begin{split}
    \min_{Q\in\Qfs}&~\|\nabla^2Q-\nabla^2Q_0\|_\mathrm{F}^2+\sigma\|\nabla Q(x_0)-\nabla Q_0(x_0)\|_2^2\\
  \mathrm{s.t.}&~Q(x) = F(x),~~x\in \IntS,
  \label{lnintP2}
\end{split}
\end{equation}
$x_0$ and $\sigma$ being specifically selected parameters, $\sigma$ nonnegative. Again, (\ref{lnintP2}) can be interpreted as a least-norm interpolation about \mbox{$Q-Q_0$}. Powell \cite{powell2012beyond} motivates (\ref{lnintP2}) by an algebraic example for which the symmetric Broyden update does not behave satisfactorily. We will study the extended symmetric Broyden update in Section \ref{ESB}.

  Apart from model-based methods, least-norm interpolation also has applications in direct search methods. Cust\'{o}dio, Rocha, and Vicente \cite{custodio2010incorporating} incorporates models defined by (\ref{lnintCST}) and (\ref{lnintP1}) into direct search. The authors attempt to enhance the performance of direct search methods by taking search steps based on these models. It is reported that (\ref{lnintCST}) works better for their method, and their procedure provides significant improvements to direct search methods of directional type.
  
  For more discussions about least-norm interpolation in derivative-free
  optimization, we refer the readers to Chapters 5 and 11 of Conn, Scheinberg, and Vicente \cite{Conn:2009:IntroDFO}.

\section{The $\Sob{2}{1}$ Sobolev Seminorm of Quadratic Functions}\label{results}
In Sobolev space theory \cite{adams1975sobolev,evans1998partial}, the $\Sob{2}{1}$ seminorm of a function $f$ over a domain $\Omega$ is defined as 
\begin{equation}
  |f|_{\Sob{2}{1}(\Omega)} = \left[\int_{\Omega}\|\nabla f(x)\|_2^2\md x\right]^{1/2}.
\end{equation}
In this section, we give an explicit formula for the $\Sob{2}{1}$ seminorm of quadratic functions when $\Omega$ is a ball, and accordingly present a new understanding of least-norm interpolation. We prove that least-norm interpolation essentially seeks the quadratic interpolant with minimal $\Sob{2}{1}$ seminorm over a ball. Theorem \ref{ThFormula}, Theorem \ref{insightnonzero}, and Theorem \ref{insightzero} are the main theoretical results of this paper.

\subsection{The $\Sob{2}{1}$ Seminorm of Quadratic Functions}\label{secformula}

The $\Sob{2}{1}$ seminorm of a quadratic function over a ball can be expressed explicitly in terms of its coefficients. We present the formula in the following theorem.

\begin{theorem}\label{ThFormula}
  Let $x_0$ be a point in $\Real^n$, $r$ be a positive number, and
  \begin{equation}
    \ball=\left\{x\in\Real^n:~\|x-x_0\|_2\leq r\right\}.
    \label{ball}
  \end{equation}
  Then for any $Q\in \Qfs$,
  \begin{equation}
    |Q|_{\Sob{2}{1}(\ball)}^2=V_nr^{n}\left[\frac{r^2}{n+2}\|\nabla^2 Q\|_\mathrm{F}^2+\|\nabla Q(x_0)\|_2^2\right],
    \label{formula}
  \end{equation}
  where $V_n$ is the volume of the unit ball in $\Real^n$.
\end{theorem}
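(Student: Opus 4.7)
The plan is to write the quadratic $Q$ in its Taylor expansion around $x_0$ and integrate $\|\nabla Q\|_2^2$ term by term, using the rotational symmetry of the ball to kill the cross terms.

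First I would set $g = \nabla Q(x_0)$ and $H = \nabla^2 Q$ (a constant symmetric matrix since $Q$ is quadratic), so that
\begin{equation*}
\nabla Q(x) = g + H(x-x_0).
\end{equation*}
Expanding the square gives
\begin{equation*}
\|\nabla Q(x)\|_2^2 = \|g\|_2^2 + 2\Tran{g}H(x-x_0) + \Tran{(x-x_0)}H^2(x-x_0).
\end{equation*}
Next I would integrate each of the three pieces over $\ball$. The constant term contributes $V_n r^n \|g\|_2^2$. For the linear term, after the change of variables $y = x-x_0$, the integrand $2\Tran{g}Hy$ is odd on the centered ball $\ball(0,r)$, so it integrates to zero. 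The only real work is the quadratic piece.

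For the quadratic term, write $\int_{\ball(0,r)} \Tran{y}H^2 y\,\md y = \sum_{i,j}(H^2)_{ij}\int_{\ball(0,r)} y_iy_j\,\md y$. By the rotational symmetry of the ball, $\int y_iy_j\,\md y$ vanishes for $i\ne j$ and equals $\frac{1}{n}\int\|y\|_2^2\,\md y$ for $i=j$. Using polar coordinates,
\begin{equation*}
\int_{\ball(0,r)}\|y\|_2^2\,\md y = \int_0^r \rho^2 \cdot nV_n\rho^{n-1}\,\md\rho = \frac{nV_n r^{n+2}}{n+2},
\end{equation*}
so each diagonal moment equals $V_n r^{n+2}/(n+2)$. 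This collapses the sum to $\frac{V_n r^{n+2}}{n+2}\Tr(H^2)$, and since $H$ is symmetric, $\Tr(H^2) = \|H\|_\mathrm{F}^2$. Assembling the three contributions yields exactly the claimed formula.

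I do not expect a real obstacle: the argument is a routine computation once one exploits symmetry to eliminate the cross term and to reduce the quadratic moment to a single radial integral. The only place where one must be a touch careful is recognising that $\sum_{i,j}(H^2)_{ij}\delta_{ij} = \Tr(H^2)$ and that for symmetric $H$ this equals $\|H\|_\mathrm{F}^2$; otherwise one might mistakenly obtain $\|H\|_2$ or $\Tr(H)^2$. Everything else is bookkeeping.
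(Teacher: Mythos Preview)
Your proof is correct and follows essentially the same route as the paper's: expand $\|\nabla Q\|_2^2$ about the center, kill the cross term by odd symmetry, and reduce the quadratic piece to the second moment $\int_{\ball(0,r)} y_i^2\,\md y$. The only minor difference is that the paper first diagonalizes the Hessian by a rotation and then evaluates $\int y_i^2\,\md y$ via slicing and a trigonometric substitution, whereas you bypass the diagonalization using $\int y_iy_j\,\md y=0$ and compute the moment more directly as $\tfrac{1}{n}\int\|y\|_2^2\,\md y$ in polar coordinates; your version is slightly cleaner but not a genuinely different argument.
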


\begin{proof}
 Without loss of generality, we assume that $x_0=0$. Let
 \begin{equation}
   g=\nabla Q(0),~~\textnormal{and}~~G=\nabla^2Q.
 \end{equation}
 Then
 \begin{equation}
   \label{snorm1}
   |Q|_{\Sob{2}{1}(\ball)}^2
   =\int_{\|x\|_2\leq r}\|Gx+g\|_2^2\md x
   =\int_{\|x\|_2\leq r}\left(\Tran{x}G^2x+2\Tran{x}Gg+\|g\|_2^2\right)\md x.
 \end{equation}
Because of symmetry, the integral of $\Tran{x}Gg$ is zero. Besides,
 \begin{equation}
   \int_{\|x\|_2\leq r}\|g\|_2^2\md x = V_nr^n\|g\|_2^2. 
   \label{intg}
 \end{equation}
 Thus we only need to find the integral of $\Tran{x}G^2x$. Now we assume that $G$ is diagonal (if not, apply a rotation). Denote the $i$-th diagonal entry of $G$ by $G_{(ii)}$, and the $i$-th coordinate of $x$ by $x_{(i)}$. Then
 \begin{equation}
   \int_{\|x\|_2\leq r}\Tran{x}G^2x\md x 
   =\int_{\|x\|_2\leq r}\left[\sum_{i=1}^nG_{(ii)}^2x_{(i)}^2\right]\md x
   =\sum_{i=1}^n\left[G_{(ii)}^2\int_{\|x\|_2\leq r}x_{(i)}^2\md x\right].
 \end{equation}
To finish the proof, we show that
\begin{equation}
  \label{intx}  \int_{\|x\|_2\leq r}x_{(i)}^2 \md x = \frac{V_nr^{n+2}}{n+2}.
\end{equation}
It suffices to justify (\ref{intx}) for the case $r=1$ and $n\ge 2$. First,
\begin{equation}
  \begin{split}
      &~\int_{\|x\|_2\le 1}x_{(i)}^2\md x\\
    =&~\int_{-1}^1u^2\md u\int_{\|v\|_2\le\sqrt{1-u^2}}\md v ~ ~ (v\in\Real^{n-1})\\
    =&~V_{n-1}\int_{-1}^1u^2(1-u^2)^{\frac{n-1}{2}}\md u\\
    =&~V_{n-1}\int_{-\frac{\pi}{2}}^{\frac{\pi}{2}}\sin^2\theta\cos^n\theta\md \theta,
\end{split}
  \label{intxsincos}
\end{equation}
and, similarly,
\begin{equation}
  \label{vsincos}V_n = V_{n-1}\int_{-\frac{\pi}{2}}^{\frac{\pi}{2}} \cos^n\theta\md \theta.  
\end{equation}
Second, integration by parts shows that
\begin{equation}
  \label{intbyp}\int_{-\frac{\pi}{2}}^{\frac{\pi}{2}} \cos^{n+2}\theta\md \theta = (n+1)\int_{-\frac{\pi}{2}}^{\frac{\pi}{2}} \sin^2\theta\cos^n\theta\md \theta.  
\end{equation}
Now it is easy to obtain (\ref{intx}) from (\ref{intxsincos}-\ref{intbyp}).
\qed
\end{proof}

Theorem \ref{ThFormula} tells us that the $\Sob{2}{1}$ seminorm of a quadratic function $Q$ over a ball $\ball$ is closely related to a combination of $\|\nabla^2Q\|_{\mathrm{F}}^2$ and $\|\nabla Q(x_0)\|_2^2$, $x_0$ being the center of $\ball$, and the combination coefficients being determined by the radius of $\ball$. This result is interesting for two reasons. First, it enables us to measure the overall magnitude of the gradient over a ball. This is nontrivial for a quadratic function, since its gradient is not constant. Second, it clarifies the analytical and geometrical meaning of combining the Frobenius norm of the Hessian and the 2-norm of the gradient, and enables us to select the combination coefficients according to the geometrical meaning.

\subsection{New Insights into Least-Norm Interpolation}\label{relation}
The $\Sob{2}{1}$ seminorm provides a new angle of view to understand least-norm interpolation. For convenience, we rewrite the least-norm interpolation problem here and call it the problem \ref{lnintr}:
\begin{equation}
  \label{lnintr}\tag{$\textnormal{P}_1(\sigma)$}
  \begin{split}
    \min_{Q\in\Qfs}&~\|\nabla^2Q\|_\mathrm{F}^2 + \sigma\|\nabla Q(x_0)\|_2^2\\
  \mathrm{s.t.}&~Q(x) = f(x),~~x\in \IntS.
\end{split}
\end{equation}
We assume that the interpolation system
\begin{equation}
  Q(x) = f(x),~~x\in\IntS
  \label{intsys}
\end{equation}
is consistent on $\Qfs$, namely
\begin{equation}
  \left\{Q\in\Qfs:~Q(x)=f(x)\textnormal{~for~every~}x\in\IntS\right\}\neq \emptyset.
  \label{consist}
\end{equation}
With the help of Theorem \ref{ThFormula}, we see that the problem \ref{lnintr} is equivalent to the problem
\begin{equation}
  \label{lsnint}
  \tag{$\textnormal{P}_2(r)$}
  \begin{split}
    \min_{Q\in\Qfs}&~|Q|_{\Sob{2}{1}(\ball_r)}\\
  \mathrm{s.t.}&~Q(x) = f(x),~~x\in \IntS
\end{split}
\end{equation}
in some sense. The purpose of this subsection is to clarify the equivalence. 

When $\sigma>0$, the equivalence is easy and we state it as follows.
\begin{theorem}\label{insightnonzero}
  If $\sigma>0$, then the least-norm interpolation problem \ref{lnintr} is equivalent to the problem \ref{lsnint} with $r = \sqrt{(n+2)/\sigma}$.
\end{theorem}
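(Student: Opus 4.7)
The plan is to derive Theorem \ref{insightnonzero} as a direct algebraic consequence of the explicit formula in Theorem \ref{ThFormula}. Since $\sigma>0$, the value $r=\sqrt{(n+2)/\sigma}$ is a well-defined positive number, so the ball $\ball_r$ centered at $x_0$ is nondegenerate and the problem \ref{lsnint} is meaningful.

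First I would substitute $r=\sqrt{(n+2)/\sigma}$ into the identity
\begin{equation*}
|Q|_{\Sob{2}{1}(\ball_r)}^2 = V_n r^n\left[\frac{r^2}{n+2}\|\nabla^2 Q\|_\mathrm{F}^2 + \|\nabla Q(x_0)\|_2^2\right]
\end{equation*}
provided by Theorem \ref{ThFormula}. The choice of $r$ makes $r^2/(n+2) = 1/\sigma$, so after factoring out $1/\sigma$ the bracket becomes $\frac{1}{\sigma}\bigl[\|\nabla^2Q\|_\mathrm{F}^2 + \sigma\|\nabla Q(x_0)\|_2^2\bigr]$. Consequently,
\begin{equation*}
|Q|_{\Sob{2}{1}(\ball_r)}^2 = \frac{V_n r^n}{\sigma}\Bigl[\|\nabla^2Q\|_\mathrm{F}^2 + \sigma\|\nabla Q(x_0)\|_2^2\Bigr],
\end{equation*}
exhibiting the squared Sobolev seminorm as a strictly positive constant (independent of $Q$) times the objective of \ref{lnintr}.

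Next I would conclude the equivalence by noting that the feasible sets of \ref{lnintr} and \ref{lsnint} coincide (both enforce $Q(x)=f(x)$ on $\IntS$), and that the objectives differ by the composition with the strictly increasing transformation $t\mapsto (V_n r^n/\sigma)\,t$ followed by $t\mapsto\sqrt{t}$ on the nonnegative reals. Strictly monotone transformations of the objective preserve the set of minimizers, so the argmin sets of the two problems are identical and the optimal values correspond under the explicit constant above.

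I do not expect any real obstacle: once Theorem \ref{ThFormula} is in hand, the proof is essentially a single substitution plus the remark that multiplying an objective by a positive constant and taking a square root does not change where the minimum is attained. The only point that deserves an explicit sentence is the convention of what ``equivalent'' means here, namely that the two problems share the same feasible set and the same set of optimal solutions.
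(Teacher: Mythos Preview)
Your proposal is correct and is exactly the argument the paper has in mind: the paper does not even write out a proof of Theorem~\ref{insightnonzero}, remarking only that ``the equivalence is easy'' once Theorem~\ref{ThFormula} is available. Your substitution $r=\sqrt{(n+2)/\sigma}$ into the formula of Theorem~\ref{ThFormula}, together with the observation that a positive scalar multiple and a square root do not alter the minimizer over the common feasible set, is precisely that easy derivation.
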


It turns out that the least-norm interpolation problem with positive $\sigma$ essentially seeks the interpolant with minimal $\Sob{2}{1}$ seminorm over a ball. The geometrical meaning of $x_0$ and $\sigma$ is clear now: $x_0$ is the center of the ball, and $\sqrt{(n+2)/\sigma}$ is the radius.

Now we consider the least-norm interpolation problem with $\sigma=0$. This case is particularly interesting, because it appears in several practical algorithms \cite{Conn98DFOPractice,Powell2004newuoa,SMW08MNH}. Since \ref{lnintr} may have multiple solutions when $\sigma=0$, we modify the definition of \ref{lfnint}: from now on, \ref{lfnint} is defined to be the bilevel least-norm interpolation problem   
 \begin{equation}
\label{lfnint}
\tag{$\textnormal{P}_1(0)$}
\begin{split}
  \min_{Q\in\Qfs}&~\|\nabla Q(x_0)\|_2\\
  \textnormal{s.t.}&~\min_{Q\in\Qfs}~\|\nabla^2 Q\|_\mathrm{F}\\
&~~~\textnormal{s.t.}~~Q(x)=f(x),~~x\in\IntS.
\end{split}
\end{equation}
The new definition is reasonable, because the following proposition holds after the modification.
\begin{proposition}
  For each $\sigma\ge 0$, the problem \ref{lnintr} has a unique solution $Q_\sigma$. Moreover, $Q_\sigma$ converges\footnote{We define the convergence on $\Qfs$ to be the convergence of coefficients.} to $Q_0$ when $\sigma$ tends to $0^+$.
  \label{converge}
\end{proposition}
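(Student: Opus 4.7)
The plan is to handle $\sigma>0$ and $\sigma=0$ separately for the uniqueness claim, and then link them via a compactness argument to obtain the convergence.

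For $\sigma>0$, I would establish uniqueness by strict convexity on the feasible affine subspace. The objective $\|\nabla^2 Q\|_\mathrm{F}^2+\sigma\|\nabla Q(x_0)\|_2^2$ is a positive semidefinite quadratic form in the coefficients of $Q$, and its kernel on $\Qfs$ consists exactly of the constant polynomials. If two distinct minimizers existed, their difference $D$ would lie in the recession subspace $\{D\in\Qfs:D(x)=0\textnormal{ for every }x\in\IntS\}$ and would make the quadratic form vanish, forcing $D$ to be a constant that vanishes on the nonempty $\IntS$; hence $D=0$. Existence follows from the coercivity of the objective modulo constants, after fixing the constant via any interpolation equation.

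For $\sigma=0$, I would peel off the two levels of \ref{lfnint} in order. The inner problem minimizes $Q\mapsto\|\nabla^2 Q\|_\mathrm{F}^2$, a function of the Hessian alone, over the nonempty feasible affine subspace; strict convexity in the Hessian forces the optimal Hessian $G^\ast$ to be unique, and the inner solution set $\setrn{S}^\ast$ is an affine subspace of $\Qfs$ parametrized by $g=\nabla Q(x_0)$ and $c=Q(x_0)$. The outer problem then becomes minimization of the strictly convex $\|g\|_2^2$ over the projection of $\setrn{S}^\ast$ onto the $g$-coordinates, which yields a unique optimal $g^\ast$; with $G^\ast$ and $g^\ast$ fixed, any interpolation equation $Q(x)=f(x)$, $x\in\IntS$, pins down $c$ uniquely.

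For the convergence $Q_\sigma\to Q_0$ as $\sigma\to 0^+$, I would use $Q_0$ as a competitor. Optimality of $Q_\sigma$ in \ref{lnintr} gives
\begin{equation*}
\|\nabla^2 Q_\sigma\|_\mathrm{F}^2+\sigma\|\nabla Q_\sigma(x_0)\|_2^2
\le\|\nabla^2 Q_0\|_\mathrm{F}^2+\sigma\|\nabla Q_0(x_0)\|_2^2,
\end{equation*}
while optimality of $Q_0$ at the inner level gives $\|\nabla^2 Q_\sigma\|_\mathrm{F}^2\ge\|\nabla^2 Q_0\|_\mathrm{F}^2$. Combining these, $\|\nabla^2 Q_\sigma\|_\mathrm{F}^2\to\|\nabla^2 Q_0\|_\mathrm{F}^2$ and $\|\nabla Q_\sigma(x_0)\|_2\le\|\nabla Q_0(x_0)\|_2$ for all $\sigma>0$. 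Using the interpolation constraint at any $x\in\IntS$ to control $Q_\sigma(x_0)$ via the already bounded Hessian and gradient, I conclude that $\{Q_\sigma\}_{\sigma>0}$ is bounded in the finite-dimensional space $\Qfs$. Any cluster point $\tilde Q$ as $\sigma\to 0^+$ is then feasible by continuity, attains the inner minimum (since its Hessian norm equals that of $Q_0$), and satisfies $\|\nabla\tilde Q(x_0)\|_2\le\|\nabla Q_0(x_0)\|_2$; the uniqueness established above forces $\tilde Q=Q_0$, and boundedness together with uniqueness of the cluster point gives $Q_\sigma\to Q_0$.

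The main obstacle is the bookkeeping in the last step: translating the bounds on $\|\nabla^2 Q_\sigma\|_\mathrm{F}$ and $\|\nabla Q_\sigma(x_0)\|_2$ into a bound on the full coefficient vector of $Q_\sigma$. The objective places no control whatsoever on the constant term of $Q_\sigma$, so the nonemptiness of $\IntS$ and the explicit interpolation equations are indispensable here; everything else is a straightforward combination of strict convexity and finite-dimensional compactness.
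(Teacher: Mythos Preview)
Your argument is correct. For uniqueness you follow the same line as the paper: convexity of the objective(s) pins down the Hessian and the gradient at $x_0$, and then any single interpolation equation fixes the constant term. For the convergence, however, the paper does not argue directly: it simply observes that \ref{lfnint} is the limiting problem of the penalized family \ref{lnintr} and invokes classical penalty function theory (Theorem~12.1.1 of Fletcher \cite{Fletcher:1987:PMO}). Your compactness and cluster-point argument is the self-contained unfolding of exactly that penalty argument: the competitor bound with $Q_0$ and the inner-level inequality $\|\nabla^2 Q_\sigma\|_\mathrm{F}\ge\|\nabla^2 Q_0\|_\mathrm{F}$ are precisely the ingredients the penalty framework packages, and finite dimension does the rest. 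What you gain is an elementary, citation-free proof; what the paper gains is brevity.
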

\begin{proof}
  The uniqueness of the solution is simple.
  First, the uniqueness of the Hessian and gradient is guaranteed by the convexity
  of the objective functional(s); then with the help of any one of the interpolation 
  constraints, we find that the constant term is also unique. 

  In light of the uniqueness stated above, the convergence of $\left\{ Q_\sigma \right\}$ 
  is a corollary of the classical
  penalty function theory in nonlinear optimization (see, for instance, Theorem 12.1.1 
  of Fletcher \cite{Fletcher:1987:PMO}). \qed
\end{proof}

Note that Proposition \ref{converge} implies the problem \ref{lsnint} has a unique solution for each positive $r$. Now we can state the relation between the problems \ref{lfnint} and \ref{lsnint}.
\begin{theorem}\label{insightzero}
  When $r$ tends to infinity, the solution of \ref{lsnint} converges to the solution of \ref{lfnint}.
\end{theorem}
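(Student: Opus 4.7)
The plan is to deduce Theorem \ref{insightzero} as a direct corollary of Theorem \ref{insightnonzero} combined with Proposition \ref{converge}, rather than treating it as an independent convergence result. The substantive content has already been packed into the explicit formula of Theorem \ref{ThFormula} and the penalty-function convergence of Proposition \ref{converge}; what remains is essentially a reparametrization.

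First I would set up the correspondence. By Theorem \ref{ThFormula}, minimizing $|Q|_{\Sob{2}{1}(\ball)}$ over the interpolation constraints is equivalent to minimizing
\begin{equation*}
\frac{r^{2}}{n+2}\|\nabla^{2} Q\|_{\mathrm{F}}^{2}+\|\nabla Q(x_0)\|_2^{2},
\end{equation*}
which after rescaling equals $\|\nabla^{2} Q\|_{\mathrm{F}}^{2}+\sigma\|\nabla Q(x_0)\|_2^{2}$ with $\sigma=(n+2)/r^{2}$. This is precisely the content of Theorem \ref{insightnonzero}: for every $r>0$, the unique solution of \ref{lsnint} equals $Q_{(n+2)/r^{2}}$, using the notation $Q_\sigma$ from Proposition \ref{converge} for the unique solution of \ref{lnintr}.

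Second, I would let $r\to\infty$. Then $\sigma(r):=(n+2)/r^{2}\to 0^{+}$, so Proposition \ref{converge} yields $Q_{\sigma(r)}\to Q_0$, where $Q_0$ is the unique solution of the bilevel problem \ref{lfnint}. Combining with the identification from the previous paragraph, the solution of \ref{lsnint} converges (coefficientwise) to $Q_0$ as $r\to\infty$, which is exactly the assertion of Theorem \ref{insightzero}.

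The main ``obstacle,'' if any, is merely a matter of careful bookkeeping: checking that the map $r\mapsto(n+2)/r^{2}$ sends the limit $r\to\infty$ to $\sigma\to 0^{+}$ in the sense required by Proposition \ref{converge}, and that uniqueness of $Q_\sigma$ for every $\sigma\geq 0$ (also in Proposition \ref{converge}) makes the phrase ``the solution of \ref{lsnint}'' well-defined for all positive $r$. Both are immediate, so no genuine difficulty arises at this stage.
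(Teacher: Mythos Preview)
Your proposal is correct and is exactly the argument the paper intends: the paper states Theorem \ref{insightzero} without an explicit proof, having just noted that Proposition \ref{converge} implies uniqueness of the solution of \ref{lsnint} for each $r>0$, so the result is meant to follow immediately from Theorem \ref{insightnonzero} together with Proposition \ref{converge} via the reparametrization $\sigma=(n+2)/r^{2}$. There is nothing to add or correct.
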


Theorem \ref{insightzero} indicates that, when $\sigma=0$, the least-norm interpolation problem seeks the interpolant with minimal $\Sob{2}{1}$ seminorm over $\Real^n$ in the sense of limit. Thus Theorem \ref{insightzero} is the extension of Theorem \ref{insightnonzero} to the case $\sigma=0$.

The questions listed in Section \ref{motint} can be answered now. The meaning of the objective function in least-norm interpolation is to minimize the $\Sob{2}{1}$ seminorm of the interpolant over a ball. The reason to combine the Frobenius norm of the Hessian and the 2-norm of the gradient is to measure the $\Sob{2}{1}$ seminorm of the interpolant. The parameters $x_0$ and $\sigma$ determines the ball where the $\Sob{2}{1}$ seminorm is calculated, $x_0$ being the center and $\sqrt{(n+2)/\sigma}$ being the radius. When $\sigma=0$, we can interprete least-norm interpolation in the sence of limit. 

\section{On the Extended Symmetric Broyden Update} \label{ESB}
In this section, we employ the $\Sob{2}{1}$ seminorm to study the extended symmetric Broyden update proposed by Powell \cite{powell2012beyond}. As introduced in Section \ref{lni}, the update defines $Q_+$ to be the solution of 
\begin{equation}
  \begin{split}
    \min_{Q\in\Qfs}&~\|\nabla^2Q-\nabla^2Q_0\|_\mathrm{F}^2+\sigma\|\nabla Q(x_0)-\nabla Q_0(x_0)\|_2^2\\
  \mathrm{s.t.}&~Q(x) = F(x),~~x\in \IntS,
  \label{lnintBB}
\end{split}
\end{equation}
$F$ being the objective function, and $Q_0$ being the model used in the previous trust-region iteration. When $\sigma=0$, it is the symmetric Broyden update studied by Powell \cite{PowellLFNU,Powell2004newuoa,Powell2008NEWUOAdev}. We focus on the case $\sigma>0$. In Subsection \ref{intupd}, we interpret the update with the $\Sob{2}{1}$ seminorm. In Subsection \ref{choice}, we discuss the choices of $x_0$ and $\sigma$ in the update, and show how to choose $x_0$ and $\sigma$ according to their geometrical meaning. In Subsection \ref{numerical}, we test our choices of $x_0$ and $\sigma$ through numerical experiments. It is worth mentioning that, the numerical experiments in Subsection \ref{numerical} are designed in a special way in order to reduce the influence of computer rounding errors and obtain more convincing results. 

\subsection{Interpret the Update with the $\Sob{2}{1}$ Seminorm} \label{intupd}
According to Theorem \ref{insightnonzero}, problem (\ref{lnintBB}) is equivalent to
\begin{equation}
  \begin{split}
    \min_{Q\in\Qfs}&~|Q-Q_0|_{\Sob{2}{1}(\ball)}\\
  \mathrm{s.t.}&~Q(x) = F(x),~~x\in \IntS,
  \end{split}
  \label{lnintBBh}
\end{equation}
where
\begin{equation}\label{ballsig}
  \ball=\left\{x\in\Real^n:~\|x-x_0\|_2\leq \sqrt{(n+2)/\sigma}\right\}.
\end{equation}
Thus the extended symmetric Broyden update seeks the closest quadratic model to $Q_0$ subject to the interpolation constraints, distance being measured by the seminorm $|\cdot|_{\Sob{2}{1}(\ball)}$. 

Similar to the symmetric Broyden update, the extended symmetric Broyden update enjoys a very good approximation property when $F$ is a quadratic function, as stated in Proposition \ref{decreaseh}.  
\begin{proposition}\label{decreaseh}
  If $F$ is a quadratic function, then the solution $Q_+$ of problem (\ref{lnintBB}) satisfies
\begin{equation}\label{decreaseh1}
  |Q_+-F|_{\Sob{2}{1}(\ball)}^2 = |Q_0-F|_{\Sob{2}{1}(\ball)}^2 - |Q_+-Q_0|_{\Sob{2}{1}(\ball)}^2,
\end{equation}
where $\ball$ is defined as (\ref{ballsig}).
\end{proposition}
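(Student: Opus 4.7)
The plan is to identify problem (\ref{lnintBBh}) as an orthogonal projection with respect to a semi-inner product, and then extract (\ref{decreaseh1}) as a Pythagorean identity. Let $\mathcal{A}=\{Q\in\Qfs:Q(x)=F(x),\,x\in\IntS\}$ denote the affine subspace of quadratic interpolants. By Theorem \ref{ThFormula} together with the choice $r=\sqrt{(n+2)/\sigma}$ dictated by (\ref{ballsig}), the seminorm $|Q-Q_0|_{\Sob{2}{1}(\ball)}^2$ is a positive multiple of the objective of (\ref{lnintBB}), so $Q_+$ is the unique minimizer of $|Q-Q_0|_{\Sob{2}{1}(\ball)}$ on $\mathcal{A}$. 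The relevant semi-inner product on $\Qfs$ is
\[
\langle P,R\rangle := \int_{\ball} \nabla P(x)\cdot\nabla R(x)\md x,
\]
which induces $|\cdot|_{\Sob{2}{1}(\ball)}$ and inherits bilinearity and symmetry from the integral.

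Since $F$ itself is a quadratic function that trivially interpolates itself on $\IntS$, we have $F\in\mathcal{A}$, so $Q_++t(F-Q_+)\in\mathcal{A}$ for every $t\in\Real$. The optimality of $Q_+$ then forces the smooth scalar function $t\mapsto|Q_++t(F-Q_+)-Q_0|_{\Sob{2}{1}(\ball)}^2$ to attain its minimum at $t=0$. Differentiating at $t=0$ yields the orthogonality relation
\[
\langle Q_+-Q_0,\,F-Q_+\rangle = 0.
\]

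Finally, I would decompose
\[
|Q_0-F|_{\Sob{2}{1}(\ball)}^2 = |(Q_0-Q_+)+(Q_+-F)|_{\Sob{2}{1}(\ball)}^2,
\]
expand using bilinearity of $\langle\cdot,\cdot\rangle$, and invoke the orthogonality above to annihilate the cross term, which gives exactly (\ref{decreaseh1}). I do not expect any serious obstacle: the only subtlety worth verifying is that the first-order condition really produces the claimed orthogonality, but this is immediate because $|\cdot|_{\Sob{2}{1}(\ball)}^2$ is a smooth nonnegative quadratic form in the coefficients of $Q$. The substantive content is just the translation between the algebraic objective of (\ref{lnintBB}) and the integral seminorm (\ref{lnintBBh}), which Theorem \ref{insightnonzero} has already made available.
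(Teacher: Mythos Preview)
Your proposal is correct and follows essentially the same route as the paper: both use the one-parameter family $Q_++t(F-Q_+)$ (the paper writes it with the opposite sign of $t$) inside the affine interpolation set, differentiate $|Q_++t(F-Q_+)-Q_0|_{\Sob{2}{1}(\ball)}^2$ at $t=0$ to get the orthogonality $\langle Q_+-Q_0,\,F-Q_+\rangle=0$, and then read off the Pythagorean identity. The only cosmetic difference is that the paper extracts (\ref{decreaseh1}) by evaluating this quadratic at $t=-1$, whereas you expand $|(Q_0-Q_+)+(Q_+-F)|_{\Sob{2}{1}(\ball)}^2$ directly.
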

\begin{proof}
Let $Q_t = Q_+ + t(Q_+-F)$, $t$ being any real number. Then $Q_t$ is a quadratic function interpolating $F$ on $\IntS$, as $F$ is quadratic and $Q_+$ interpolates it. The optimality of $Q_+$ implies that the function
  \begin{equation}
    \varphi(t) = |Q_t- Q_0|_{\Sob{2}{1}(\ball)}^2
  \end{equation}
attains its minimum when $t$ is zero. Expanding $\varphi(t)$, we obtain
\begin{equation}
  \varphi(t) = t^2|Q_+- F|_{\Sob{2}{1}(\ball)}^2+2t\int_{\ball}\Tran{[\nabla (Q_+-Q_0)(x)]}[\nabla (F-Q_0)(x)]\md x+|Q_+- Q_0|_{\Sob{2}{1}(\ball)}^2.
\end{equation}
Hence
\begin{equation}
  \int_{\ball}\Tran{[\nabla (Q_+-Q_0)(x)]}[\nabla (F-Q_0)(x)]\md x = 0.
\end{equation}
Then we obtain (\ref{decreaseh1}) by considering $\varphi(-1)$.\qed 
\end{proof}
In light of Theorem \ref{ThFormula}, equation (\ref{decreaseh1}) is equivalent to equation (1.9) of Powell \cite{powell2012beyond}. Notice that equation (\ref{decreaseh1}) implies
\begin{equation}\label{decreaseg}
  \int_{\ball}\|\nabla Q_+(x)-\nabla F(x)\|_2^2\md x \le \int_{\ball}\|\nabla Q_0(x)-\nabla F(x)\|_2^2\md x.
\end{equation}
In other words, $\nabla Q_+$ approximates $\nabla F$ on $\ball$ better than $\nabla Q_0$ unless $\nabla Q_+ = \nabla Q_0$. 
\subsection{Choices of $x_0$ and $\sigma$} \label{choice}
Now we turn our attention to the choices of $x_0$ and $\sigma$ in the update. Recall that the purpose of the update is to construct a model for a trust-region subproblem. As in classical trust-region methods, the trust region is available before the update is applied, and we suppose it is 
\begin{equation}
  \left\{x: \|x-\bar{x}\|_2 \le \Delta\right\},
  \label{trustregion}
\end{equation}
the point $\bar{x}$ being the trust-region center, and the positive number $\Delta$ being the trust-region radius.

Powell \cite{powell2012beyond} chooses $x_0$ and $\sigma$ by exploiting the Lagrange functions of the interpolation problem (\ref{lnintBB}). Suppose that $\IntS = \left\{y_0,~y_1,~\dots,~y_m \right\}$, then the $i$-th ($i = 0,~1,~\dots,~m$) Lagrange function of problem (\ref{lnintBB}) is defined to be the solution of 
\begin{equation}
  \begin{split}
    \min_{l_i\in\Qfs}&~\|\nabla^2l_i\|_{\textrm{F}}^2+\sigma\|\nabla l_i\|_2^2\\
    \mathrm{s.t.}&~l_i(y_j) = \delta_{ij},~~j = 0,~1,~\dots,~m,
  \end{split}
  \label{laglnintBB}
\end{equation}
$\delta_{ij}$ being the Kronecker delta. In the algorithm of Powell \cite{powell2012beyond}, $\IntS$ is maintained in a way so that $Q_0$ interpolates $F$ on $\IntS$ except one point, say $y_0$ without loss of generality. Then the interpolation constraints can be rewritten as
\begin{equation}
  Q(y_j)-Q_0(y_j) = \left[F(y_0)-Q_0(y_0)\right]\delta_{0j},~~j=0,~1,~\dots,~m.
\end{equation}
Therefore the solution of problem (\ref{lnintBB}) is
\begin{equation}
  Q_+ = Q_0+\left[F(y_0)-Q_0(y_0)\right]l_0.
\end{equation}
Hence $l_0$ plays a central part in the update. Thus Powell \cite{powell2012beyond} chooses $x_0$ and $\sigma$ by examining $l_0$. Powell shows that, in order to make sure $l_0$ behaves well, the ratio $\|x_0-\bar{x}\|_2/\Delta$ should not be much larger than one, therefore $x_0$ is set to be $\bar{x}$ throughout the calculation. The choice of $\sigma$ is a bit complicated. The basic idea is to balance $\|\nabla^2 l_0\|_{\textrm{F}}^2$ and $\sigma\|\nabla l_0(x_0)\|_2^2$. Thus $\sigma$ is set to $\eta/\xi$, where $\eta$ and $\xi$ are estimates of the magnitudes of $\|\nabla^2 l_0\|_{\textrm{F}}^2$ and  $\|\nabla l_0(x_0)\|_2^2$, respectively. See Powell \cite{powell2012beyond} for details.

In Subsection \ref{intupd}, we have interpreted the extended symmetric
Broyden update with the ${\Sob{2}{1}}$ seminorm. The new interpretation
enables us to choose $x_0$ and $\sigma$ in a geometrical way. According
to problem (\ref{lnintBBh}), choosing $x_0$ and $\sigma$ is equivalent
to choosing the ball $\ball$. Let us think about the role that $\ball$
plays in the update. First, $\ball$ is the region where the update tries
to preserve information from $Q_0$, by minimizing the change with
respect to the seminorm $|\cdot|_{\Sob{2}{1}(\ball)}$. Second, $\ball$
is the region where the update tends to improve the model, as suggested
by the facts (\ref{decreaseh1}) and (\ref{decreaseg}) for quadratic
objective function. Thus we should choose $\ball$ to be a region where
the behavior of the new model is important to us. It may seem adequate
to pick $\ball$ to be the trust region (\ref{trustregion}). But we
prefer to set it bigger, because the new model $Q_+$ will influence its
successors via the least change update, and thereby influence subsequent
trust-region iterations. Thus it is myopic to consider only the current
trust region, and a more sensible choice is to let $\ball$ be the ball
\mbox{$\left\{x: \|x-\bar{x}\|_2\le M\Delta\right\}$}, $M$ being
a positive number bigger than one\footnote{Since 
the trust-region radii of the subsequent iterations 
vary dynamically according to the degree of success, there should
be more elaborate ways to define the domain
$\ball$ adaptively.
The definition given here might be the simplest one, and is enough for
our experiment.}. 
Moreover, we find in practice that it is helpful to require \mbox{$\ball\supset\IntS$}. Thus our choice of $\ball$ is
\begin{equation}
\left\{x: \|x-\bar{x}\|_2\le r\right\},
\end{equation}
where 
\begin{equation}
  r = \max\left\{M\Delta, \max_{x\in\IntS}\|x-\bar{x}\|_2\right\}.
  \label{chooser}
\end{equation}
Consequently, we set 
\begin{equation}
  x_0=\bar{x},~~\textnormal{and}~~\sigma=\frac{n+2}{r^2}.
  \label{choosexs}
\end{equation}
Therefore our choice of $x_0$ coincides with Powell's, but the choice of $\sigma$ is different.
\subsection{Numerical Results} \label{numerical}

With the help of the $\Sob{2}{1}$ seminorm, we have proposed a new method of choosing $\sigma$ for the extended symmetric Broyden update. In this subsection we test the new $\sigma$ through numerical experiments, and make comparison with the one by Powell \cite{powell2012beyond}. 

Powell \cite{powell2012beyond} implements a trust-region algorithm based on the extended symmetric Broyden update in Fortran 77.
We modify this code to get two solvers as follows for comparison.
\begin{enumerate}
  \item \ESYMBP: $\sigma$ is chosen according to Powell \cite{powell2012beyond}, as described in the second paragraph of Subsection \ref{choice}.
  \item \ESYMBS: $\sigma$ is chosen according to (\ref{chooser}--\ref{choosexs}). We set $M = 10$ in (\ref{chooser}).
  \end{enumerate}
  In the code of Powell \cite{powell2012beyond}, the size of the interpolation set $\IntS$ is kept unchanged throughout the computation, and the user can set it to be any integer between $n+2$ and $(n+1)(n+2)/2$. We chose $2n+1$ as recommended. 
The terminate criterion of this code is determined by a parameter
named \texttt{RHOEND}. \texttt{RHOEND} acts as the final 
trust-region radius in the code, and its magnitude usually agrees with
the precision of the computational solution. 
We tested \ESYMBP and \ESYMBS with \texttt{RHOEND} $ = 10^{-2}, 10^{-4}$,
and $10^{-6}$, to observe the performance of the solvers under different
requirements of solution precision. 
  The code of Powell \cite{powell2012beyond} is capable of solving problems with box constraints. We set the bounds to infinity since we are focusing on unconstrained problems. 

  We assume that evaluating the objective function is the most expensive part for optimization without derivatives. Thus we might compare the performance of different derivative-free solvers by simply counting the numbers of function evaluations they use until termination, provided that they have found the same minimizer to high accuracy. However, this is inadequate in practice, because, according to our numerical experiences, computer rounding errors could substantially influence the number of function evaluations. 
  Powell \cite{Powell2004newuoa} presents a very interesting example for this 
  phenomenon. In the example, the computational result of a test problem
  was influenced dramatically by the order of variables. But the code under
  test (\newuoa) was mathematically independent of variables' order. Thus 
  the differences in the results are due entirely to computer rounding errors.
  Please see Section 8 of Powell  \cite{Powell2004newuoa} for details of the example.

  We test the solvers \ESYMBS~and \ESYMBP~with the following method, in order to observe their performance in a relatively reliable way, and to inspect their stability with respect to 
  computer rounding errors. The basic idea is from the numerical experiments of Powell \cite{powell2012beyond}. Given a test problem $\prob$ with objective function $F$ and starting point $\hat{x}$, we randomly generate $N$ permutation matrices $P_i$ ($i=1,~2,~\dots,~N$), and let 
  \begin{equation}
    F_i(x) = F(P_ix),~~\hat{x}_i = P_i^{-1}\hat{x},~~i = 1,~2,~\dots,~N.
  \end{equation} 
  Then we employ the solvers to minimize $F_i$ starting form $\hat{x}_i$. For solver $\solver$, we obtain a vector $\#F=(\#F_1,~\#F_2,~\dots,~\#F_N)$, $\#F_i$ being the number of function evaluations required by $\solver$ for $F_i$ and $\hat{x}_i$. If the code of $\solver$
  is mathematically independent of variables' order (it is the case for \ESYMBS~and \ESYMBP),
  then all the entries of $\#F$ are identical in theory. However, it
  is not the case in practice, and the differences are completely made by
  computer rounding errors. 
  We compute the mean value and the standard deviation of vector $\#F$:
  \begin{equation}
    \mean(\#F) = \frac{1}{N}\sum_{i=1}^N\#F_i,
  \end{equation}
  and 
  \begin{equation}
    \std(\#F) = \sqrt{\frac{1}{N}\sum_{i=1}^N\left[\#F_i-\mean(\#F)\right]^2}.
  \end{equation}
When $N$ is reasonably large, $\mean(\#F)$ estimates the average performance of solver $\solver$ on problem $\prob$, and $\std(\#F)$ reflects the stability of solver $\solver$ with respect to computer rounding errors when solving problem $\prob$. We may also compute the relative standard deviation of $\#F$, namely, 
  \begin{equation}
    \rstd(\#F) = \frac{\std(\#F)}{\mean(\#F)},
    \label{rstd}
  \end{equation}
to obtain a normalized measure of stability. 
By comparing these statistics, we can reasonably assess the solvers under consideration. We use $N=10$ in practice.

With the method addressed above, we tested \ESYMBP~and \ESYMBS~on 30 unconstrained 
test problems with alterable dimension. The problems are from CUTEr \cite{cuter}
and Powell \cite{Powell2004newuoa}, with names listed in Table \ref{tab:probs}.

\begin{table}
    \centering
    \caption{Name of Test Problems}
    \label{tab:probs}
    \begin{tabular}{lllll}
\hline\noalign{\smallskip}
ARGLINA& ARGLINB& ARGLINC& ARWHEAD& BDQRTIC\\
BROYDN3D& BRYBND& CHROSEN& COSINE& CURLY10\\ 
CURLY20& CURLY30& DQRTIC& EDENSCH& EG2\\ 
ENGVAL1& FLETCBV2&  FREUROTH& GENBROWN& INTEGREQ\\
NONCVXUN& PENALTY1& PENALTY2& POWER& SCHMVETT\\ 
SPARSINE& SPARSQUR& SPHRPTS& TOINTGSS& VARDIM\\
\hline\noalign{\smallskip}
    \end{tabular}
\end{table}


For each problem, we solved it for dimensions 6, 8, 10, 12, 14, and 16. 
We set the maximal number of
function evaluations to be 1000. 
The experiments were carried out on a Thinkpad T400 laptop with Linux 2.6.33, and the compiler was gfortran in GCC 4.3.3.

We visualize the results of our experiments by presenting the performance profiles \cite{dolan2002benchmarking} of $\mean(\#F)$ in Fig. \ref{2pfmean}, \ref{4pfmean}, and \ref{6pfmean}. The same as traditional performance profiles, the higher means the better. 
The profiles suggest that, in our experiments, \ESYMBS~performed better 
than \ESYMBP~in the sense of number of function evaluations. Moreover, 
\ESYMBS~also showed better stability than \ESYMBP, with respect to computer rounding 
errors. To make the comparison, we present the performance profiles
of $\rstd$ in Fig. \ref{2pfrstd}, \ref{4pfrstd}, and \ref{6pfrstd}. 
As before, the higher means the better.
Thus the profiles support the conclusion that \ESYMBS~performed stabler than  \ESYMBP~with respect
to computer rounding errors. 

From the results of our experiments, we conclude that the $\Sob{2}{1}$ seminorm
has suggested a better way of parameter selection for extended symmetric Broyden 
update, compared with the method given in Powell \cite{powell2012beyond}. The new 
selection reduces the number of function evaluations in the resultant 
algorithm, and improves the solver's stability with respect to computer rounding errors.
Moreover, the new selection is much simpler and easier to understand than
the one in Powell \cite{powell2012beyond}. 


\begin{figure}[htbp] 
\centering 
\includegraphics[width=0.7\textwidth]{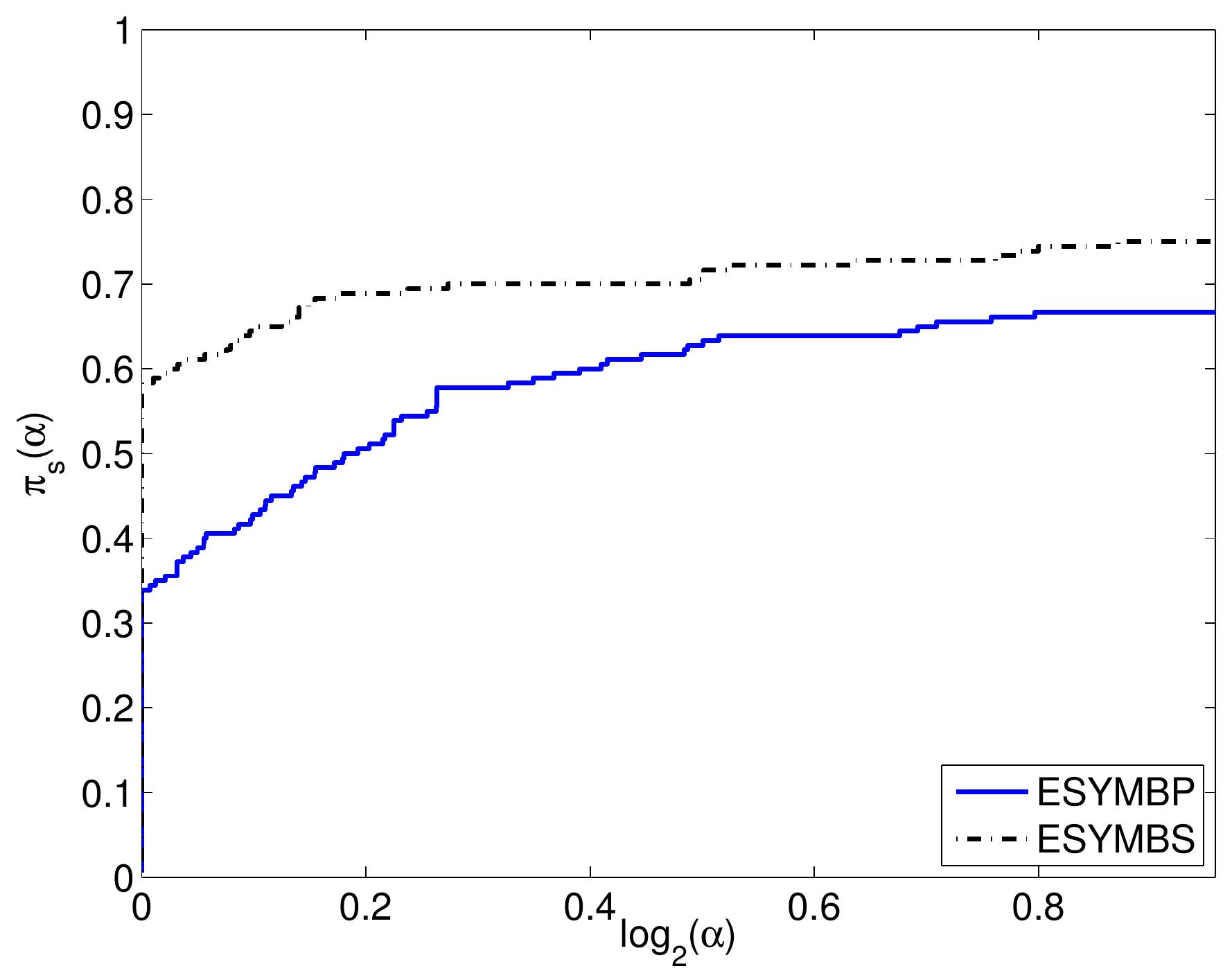} 
\caption{Performance Profile of $\mean(\#F)$ (\texttt{RHOEND}$=10^{-2}$)} \label{2pfmean} 
\end{figure}
\begin{figure}[htbp] 
\centering 
\includegraphics[width=0.7\textwidth]{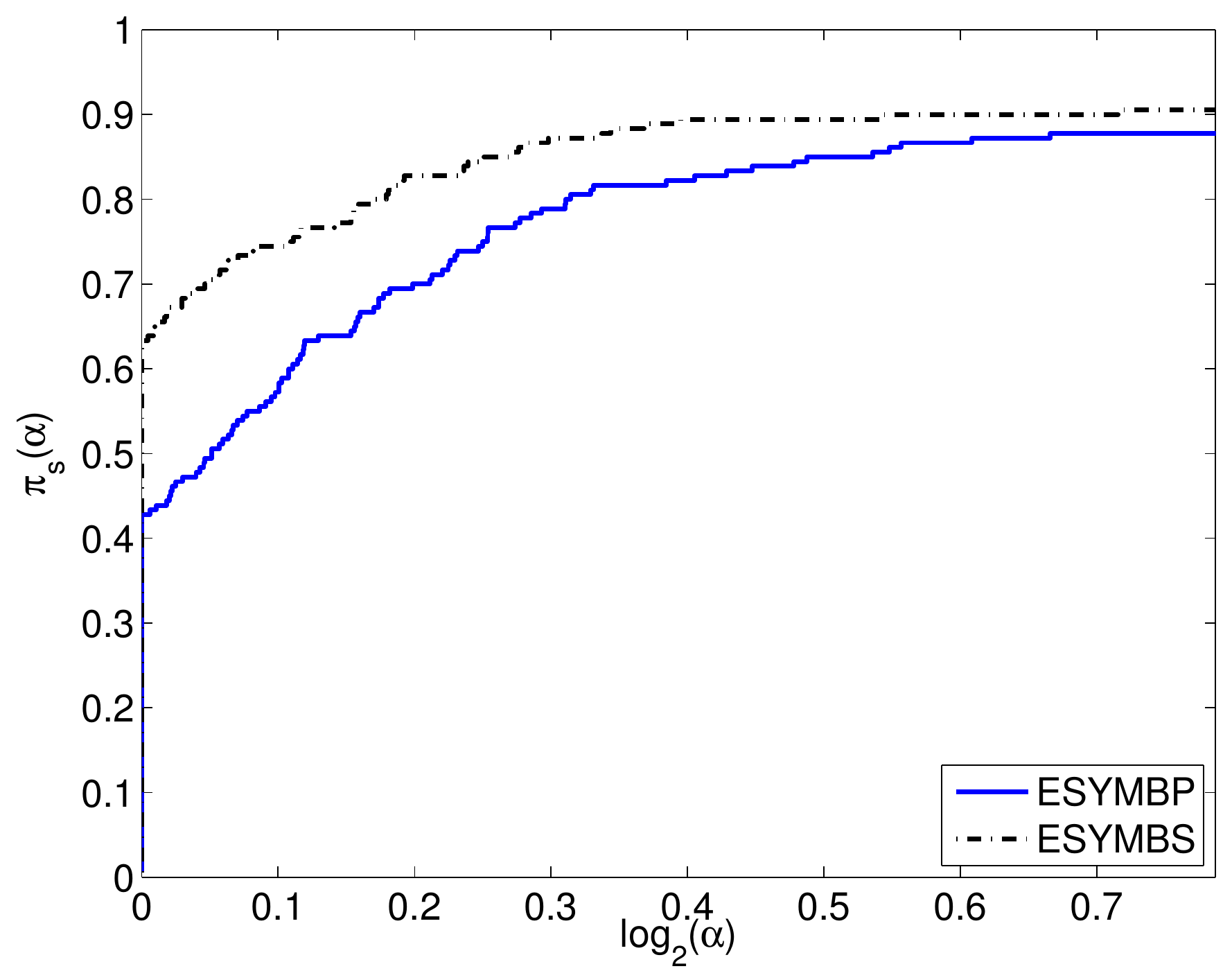} 
\caption{Performance Profile of $\mean{(\#F)}$ (\texttt{RHOEND}$=10^{-4}$)} \label{4pfmean} 
\end{figure}
\begin{figure}[htbp] 
\centering 
\includegraphics[width=0.7\textwidth]{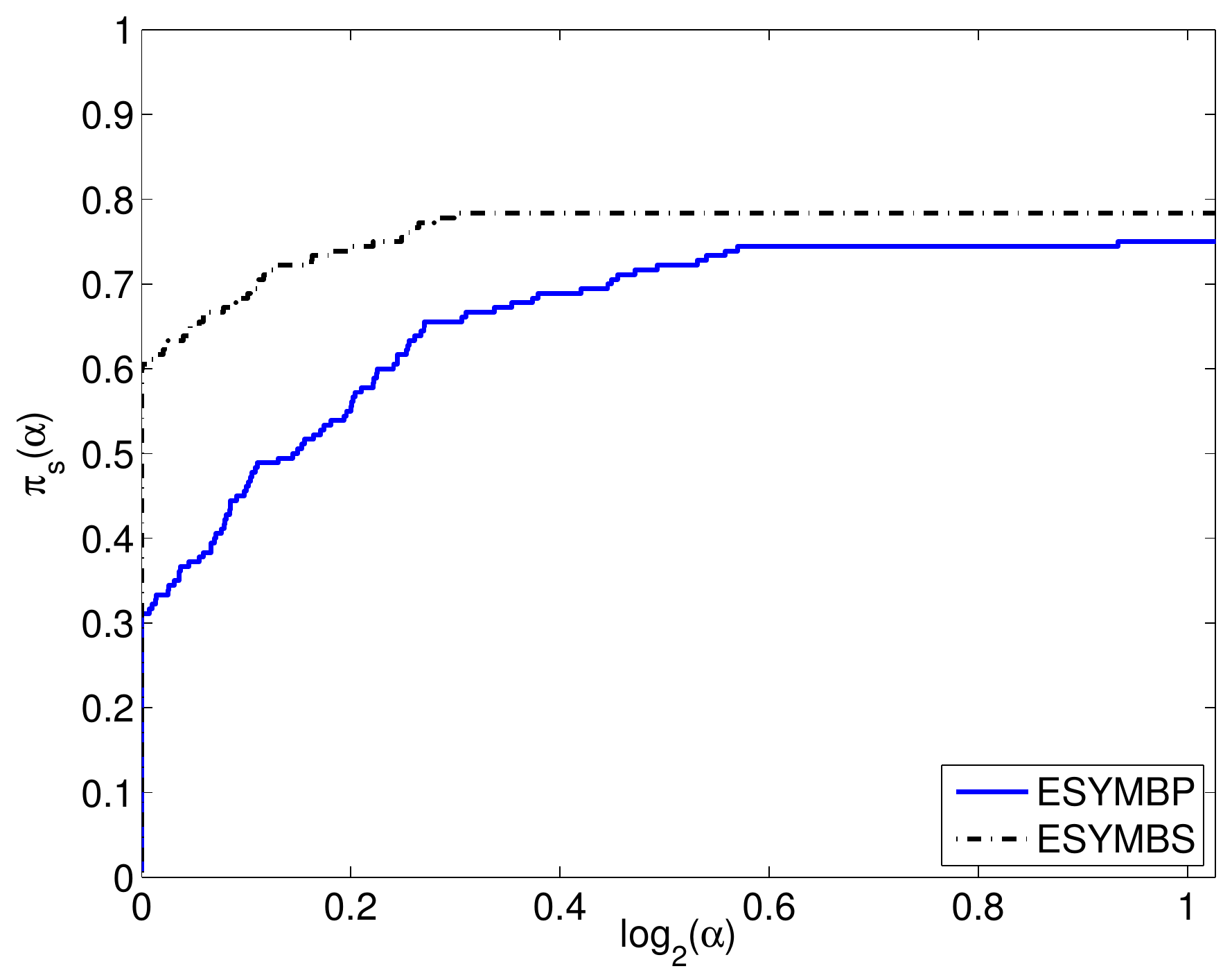} 
\caption{Performance Profile of $\mean{(\#F)}$ (\texttt{RHOEND}$=10^{-6}$)} \label{6pfmean} 
\end{figure}

\begin{figure}[htbp] 
\centering 
\includegraphics[width=0.7\textwidth]{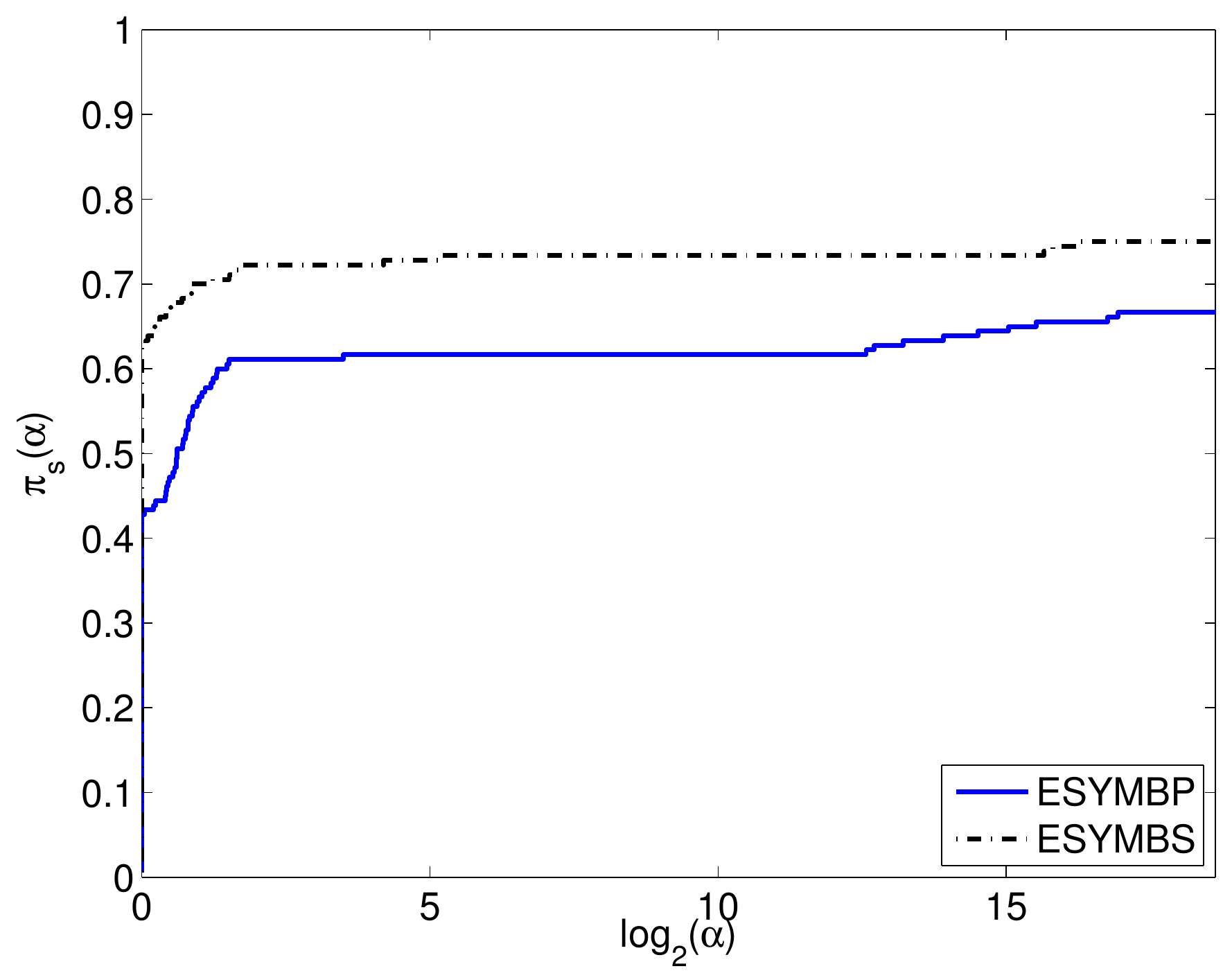} 
\caption{Performance Profile of $\rstd{(\#F)}$ (\texttt{RHOEND}$=10^{-2}$)} \label{2pfrstd} 
\end{figure}
\begin{figure}[htbp] 
\centering 
\includegraphics[width=0.7\textwidth]{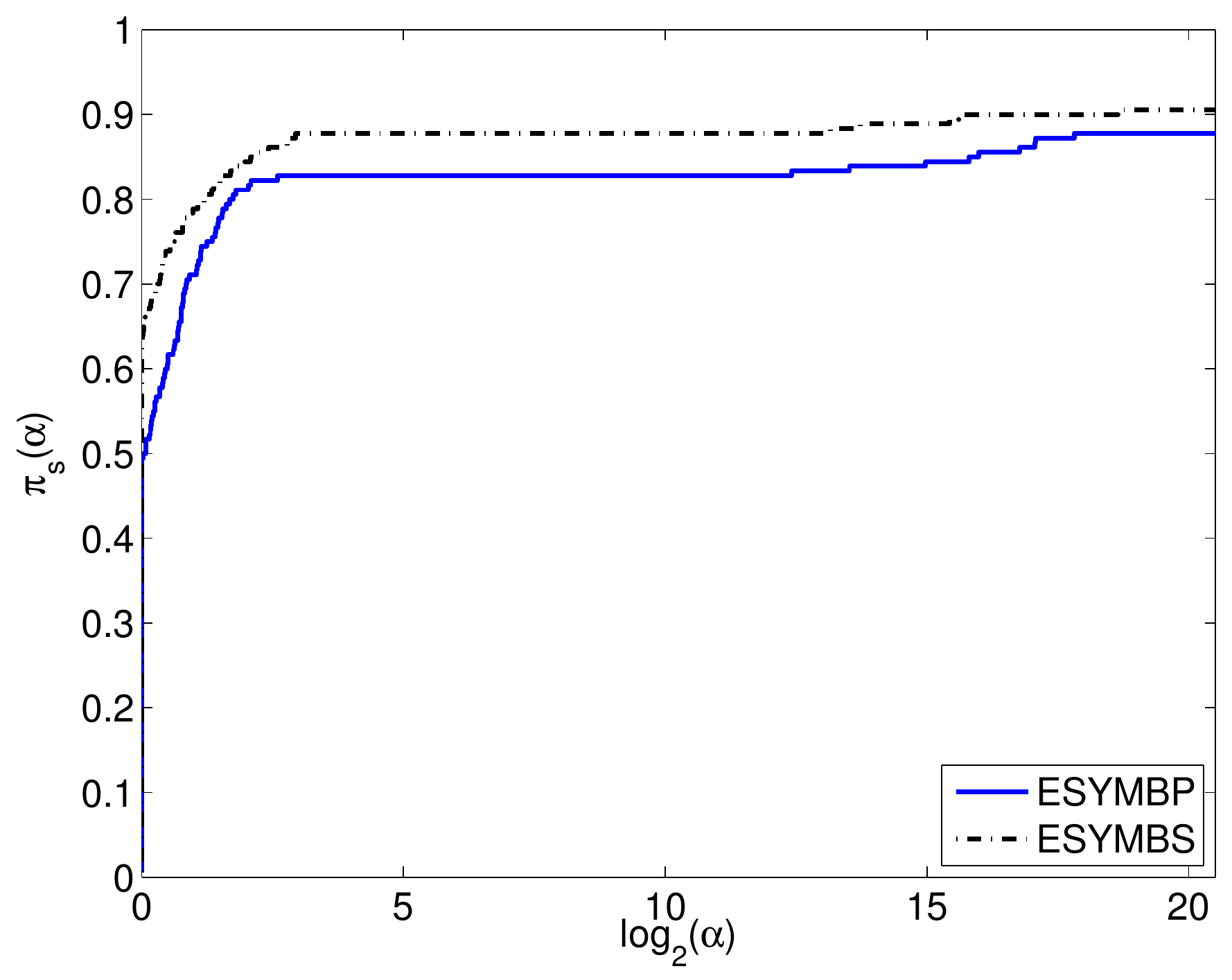} 
\caption{Performance Profile of $\rstd{(\#F)}$ (\texttt{RHOEND}$=10^{-4}$)} \label{4pfrstd} 
\end{figure}
\begin{figure}[htbp] 
\centering 
\includegraphics[width=0.7\textwidth]{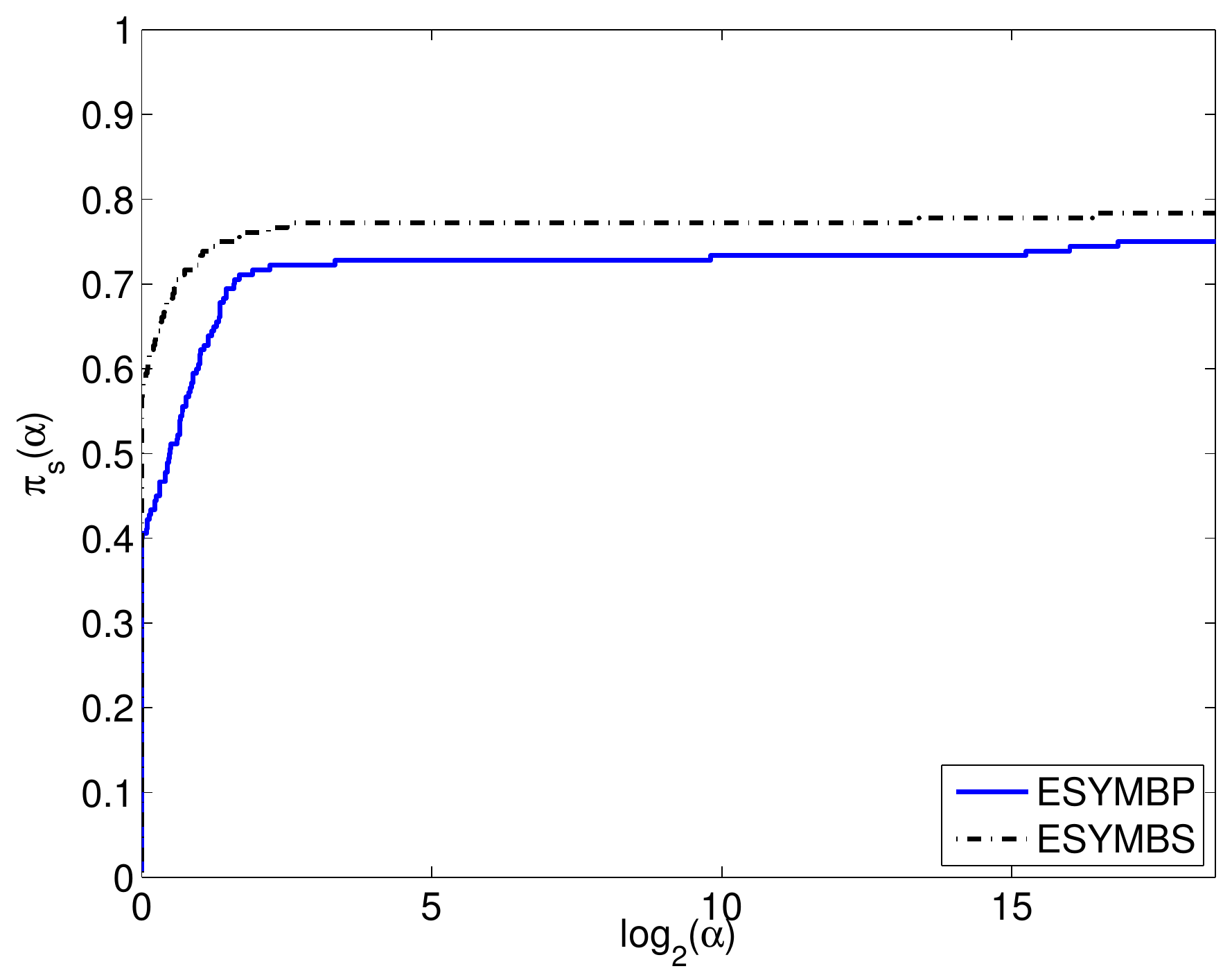} 
\caption{Performance Profile of $\rstd{(\#F)}$ (\texttt{RHOEND}$=10^{-6}$)} \label{6pfrstd} 
\end{figure}

\subsection{A Remark on Extended Symmetric Broyden Update}
It is interesting to ask whether the extended symmetric Broyden update can bring improvements to the original version in derivative-free optimization. Powell \cite{powell2012beyond} studies this problem, and it turns out that the extended version rarely reduced the number of function evaluations. We have to point out that it is also the case when $\sigma$ is selected with our method.  However, some numerical experiments of  Powell \cite{powell2012beyond} suggested that the extended version might help to improve the convergence rate. It would be desirable to theoretically investigate the local convergence properties of the updates. We think it is still too early to draw any conclusion on the extended symmetric Broyden update, unless we could find strong theoretical evidence. 

\section{Conclusions}\label{secconclusion}

Least-norm interpolation has wide applications in derivative-free optimization. Motivated by  its objective function, we have studied the $\Sob{2}{1}$ Sobolev seminorm of quadratic functions. We gives an explicit formula for the $\Sob{2}{1}$ seminorm of quadratic functions over balls. It turns out that least-norm interpolation essentially seeks a quadratic interpolant with minimal $\Sob{2}{1}$ seminorm over a ball. Moreover, we find that the parameters $x_0$ and $\sigma$ in the objective function determines the center and radius of the ball, respectively. These insights further our understanding of least-norm interpolation. 

With the help of the new observation, we have studied the extended symmetric Broyden update (\ref{lnintBB}) proposed by Powell \cite{powell2012beyond}. Since the update calculates the change to the old model by a least-norm interpolation, we can interpret it with our new theory. We have discussed how to choose $x_0$ and $\sigma$ for the update according to their geometrical meaning. This discussion leads to the same $x_0$ used by Powell \cite{powell2012beyond}, and a very easy way to choose $\sigma$. According to our numerical results, the new $\sigma$ works better than the one proposed by Powell \cite{powell2012beyond}.

Our method of designing numerical tests seems reasonable. It provides a 
more reliable measurement of computation cost than simply counting the 
numbers of function evaluations, and meanwhile reflects the stability of
solvers with respect to computer rounding errors. We hope this method can 
lead to better benchmarking of derivative-free solvers.  

\begin{acknowledgements}
  The author thanks Professor \mbox{M. J. D. Powell} for providing references and codes, for kindly answering the author's questions about the excellent software \newuoa, and for the helpful discussions. Professor Powell helped improve the proofs of Theorem \ref{ThFormula} and Proposition \ref{converge}. The author owes much to his supervisor Professor Ya-Xiang Yuan for the valuable directions and suggestions. This work was partly finished during a visit to Professor Klaus Schittkowski at Universit\"{a}t Bayreuth in 2010, where the author has received pleasant support for research. The author thanks Humboldt Fundation for the financial assistance during this visit, and he is much more than grateful to Professor Schittkowski for his warm hospitality. The author appreciates the help of Professor Andrew 
  R. Conn and an anonymous referee. Their comments have substantially improved the paper.

\end{acknowledgements}

\bibliographystyle{spmpsci}      
\bibliography{snq}   

\begin{thebibliography}{10}
\providecommand{\url}[1]{{#1}}
\providecommand{\urlprefix}{URL }
\expandafter\ifx\csname urlstyle\endcsname\relax
  \providecommand{\doi}[1]{DOI~\discretionary{}{}{}#1}\else
  \providecommand{\doi}{DOI~\discretionary{}{}{}\begingroup
  \urlstyle{rm}\Url}\fi

\bibitem{adams1975sobolev}
Adams, R.A.: Sobolev spaces.
\newblock Pure and applied mathematics. Academic Press (1975)

\bibitem{bandeira2010computation}
Bandeira, A.S., Scheinberg, K., Vicente, L.N.: Computation of sparse low degree
  interpolating polynomials and their application to derivative-free
  optimization.
\newblock Mathematical Programming \textbf{134}(1), 223--257 (2012)

\bibitem{booker1999rigorous}
Booker, A.J., Dennis, J.E., Frank, P.D., Serafini, D.B., Torczon, V., Trosset,
  M.W.: A rigorous framework for optimization of expensive functions by
  surrogates.
\newblock Structural and Multidisciplinary Optimization \textbf{17}(1), 1--13
  (1999)

\bibitem{booker1998optimization}
Booker, A.J., Dennis~Jr, J.E., Frank, P., Serafini, D.B., Torczon, V., Trosset,
  M.W.: Optimization using surrogate objectives on a helicopter test example.
\newblock PROGRESS IN SYSTEMS AND CONTROL THEORY \textbf{24}, 49--58 (1998)

\bibitem{booker1998managing}
Booker, A.J., Frank, P., Dennis~Jr, J.E., Moore, D.W., Serafini, D.B.: Managing
  surrogate objectives to optimize helicopter rotor design--further
  experiments.
\newblock In: IN PROCEEDINGS OF THE SEVENTH AIAA/USAF/NASA/ISSMO SYMPOSIUM ON
  MULTIDISCIPLINARY ANALYSIS AND OPTIMIZATION (1998)

\bibitem{brent1973algorithms}
Brent, R.P.: Algorithms for minimization without derivatives.
\newblock Prentice-Hall, Englewood Cliffs, New Jersey (1973)

\bibitem{choi2000superlinear}
Choi, T.D., Kelley, C.T.: Superlinear convergence and implicit filtering.
\newblock SIAM Journal on Optimization \textbf{10}(4), 1149--1162 (2000)

\bibitem{conn2000tr}
Conn, A.R., Gould, N.I.M., Toint, {\relax Ph}.L.: Trust-region methods,
  \emph{MPS-SIAM series on optimization}, vol.~1.
\newblock Society for Industrial Mathematics (2000)

\bibitem{CoScTo97a}
Conn, A.R., Scheinberg, K., Toint, {\relax Ph}.L.: {On the Convergence of
  Derivative-Free Methods for Unconstrained Optimization}.
\newblock In: M.D. Buhmann, A.~Iserles (eds.) Approximation Theory and
  Optimization: Tributes to M. J. D. Powell, pp. 83--108. Cambridge University
  Press, Cambridge, United Kingdom (1997)

\bibitem{Conn97recentprogress}
Conn, A.R., Scheinberg, K., Toint, {\relax Ph}.L.: Recent progress in
  unconstrained nonlinear optimization without derivatives.
\newblock Mathematical Programming \textbf{79}, 397--414 (1997)

\bibitem{Conn98DFOPractice}
Conn, A.R., Scheinberg, K., Toint, {\relax Ph}.L.: A derivative free
  optimization algorithm in practice.
\newblock In: Proceedings of 7th AIAA/USAF/NASA/ISSMO Symposium on
  Multidisciplinary Analysis and Optimization. St. Louis, MO (1998)

\bibitem{Conn:2009:IntroDFO}
Conn, A.R., Scheinberg, K., Vicente, L.N.: Introduction to Derivative-Free
  Optimization.
\newblock Society for Industrial and Applied Mathematics, Philadelphia, PA, USA
  (2009)

\bibitem{CoTo96a}
Conn, A.R., Toint, {\relax Ph}.L.: {An Algorithm Using Quadratic Interpolation
  for Unconstrained Derivative Free Optimization}.
\newblock In: G.~{Di Pillo}, F.~Giannessi (eds.) Nonlinear Optimization and
  Applications, pp. 27--47. Kluwer Academic/Plenum Publishers, New York (1996)

\bibitem{custodio2010incorporating}
Cust{\'o}dio, A.L., Rocha, H., Vicente, L.N.: Incorporating minimum frobenius
  norm models in direct search.
\newblock Computational Optimization and Applications \textbf{46}(2), 265--278
  (2010)

\bibitem{dennis1979least}
Dennis~Jr, J.E., Schnabel, R.B.: Least change secant updates for quasi-newton
  methods.
\newblock Siam Review pp. 443--459 (1979)

\bibitem{diniz2008derivative}
Diniz-Ehrhardt, M.A., Mart{\'\i}nez, J.M., Rayd{\'a}n, M.: A derivative-free
  nonmonotone line-search technique for unconstrained optimization.
\newblock Journal of Computational and Applied Mathematics \textbf{219}(2),
  383--397 (2008)

\bibitem{dolan2002benchmarking}
Dolan, E.D., Mor{\'e}, J.J.: Benchmarking optimization software with
  performance profiles.
\newblock Mathematical Programming \textbf{91}(2), 201--213 (2002)

\bibitem{evans1998partial}
Evans, L.C.: Partial differential equations.
\newblock Graduate studies in mathematics. American Mathematical Society (1998)

\bibitem{Fletcher:1987:PMO}
Fletcher, R.: Practical Methods of Optimization, second edn.
\newblock John Wiley~\&~Sons, New York (1987)

\bibitem{fowler2008comparison}
Fowler, K.R., Reese, J.P., Kees, C.E., Dennis~Jr, J.E., Kelley, C.T., Miller,
  C.T., Audet, C., Booker, A.J., Couture, G., Darwin, R.W.: Comparison of
  derivative-free optimization methods for groundwater supply and hydraulic
  capture community problems.
\newblock Advances in Water Resources \textbf{31}(5), 743--757 (2008)

\bibitem{gilmore1995implicit}
Gilmore, P., Kelley, C.T.: An implicit filtering algorithm for optimization of
  functions with many local minima.
\newblock SIAM Journal on Optimization \textbf{5}, 269 (1995)

\bibitem{cuter}
Gould, N.I.M., Orban, D., Toint, {\relax Ph}.L.: {CUTEr} and {SifDec}: A
  constrained and unconstrained testing environment, revisited.
\newblock ACM Transactions on Mathematical Software (TOMS) \textbf{29}(4),
  373--394 (2003)

\bibitem{hemker2006derivative}
Hemker, T., Fowler, K., Von~Stryk, O.: Derivative-free optimization methods for
  handling fixed costs in optimal groundwater remediation design.
\newblock In: Proc. of the CMWR XVI-Computational Methods in Water Resources,
  pp. 19--22 (2006)

\bibitem{kelley1999iterative}
Kelley, C.: Iterative methods for optimization, \emph{Frontiers in applied
  mathematics}, vol.~18.
\newblock Society for Industrial Mathematics (1999)

\bibitem{kelley2002brief}
Kelley, C.T.: A brief introduction to implicit filtering.
\newblock North Carolina State Univ., Raleigh, NC, CRSC Tech. Rep. CRSC-TR02-28
   (2002)

\bibitem{kelley2011implicit}
Kelley, C.T.: Implicit filtering.
\newblock Society for Industrial and Applied Mathematics (2011)

\bibitem{kolda2003optimization}
Kolda, T.G., Lewis, R.M., Torczon, V.: Optimization by direct search: New
  perspectives on some classical and modern methods.
\newblock Siam Review pp. 385--482 (2003)

\bibitem{lewis2000direct}
Lewis, R.M., Torczon, V., Trosset, M.W.: Direct search methods: then and now.
\newblock Journal of Computational and Applied Mathematics \textbf{124}(1),
  191--207 (2000)

\bibitem{marazzi2002wedge}
Marazzi, M., Nocedal, J.: Wedge trust region methods for derivative free
  optimization.
\newblock Mathematical programming \textbf{91}(2), 289--305 (2002)

\bibitem{oeuvray2005trust}
Oeuvray, R.: Trust-region methods based on radial basis functions with
  application to biomedical imaging.
\newblock Ph.D. thesis, {\'E}COLE POLYTECHNIQUE F{\'E}D{\'E}RALE DE LAUSANNE
  (2005)

\bibitem{oeuvray2007new}
Oeuvray, R., Bierlaire, M.: A new derivative-free algorithm for the medical
  image registration problem.
\newblock International Journal of Modelling and Simulation \textbf{27}(2),
  115--124 (2007)

\bibitem{oeuvray2008derivative}
Oeuvray, R., Bierlaire, M.: a derivative-free algorithm based on radial basis
  functions.
\newblock In: International Journal of Modelling and Simulation (2008)

\bibitem{COBYLA}
Powell, M.J.D.: A direct search optimization method that models the objective
  and constraint functions by linear interpolation.
\newblock Advances in Optimization and Numerical Analysis pp. 51--67 (1994)

\bibitem{powell1998direct}
Powell, M.J.D.: Direct search algorithms for optimization calculations.
\newblock Acta Numerica \textbf{7}(1), 287--336 (1998)

\bibitem{Powell2000uobyqa}
Powell, M.J.D.: {UOBYQA: unconstrained optimization by quadratic
  approximation}.
\newblock Tech. Rep. DAMTP NA2000/14, CMS, University of Cambridge (2000)

\bibitem{powell2003trust}
Powell, M.J.D.: On trust region methods for unconstrained minimization without
  derivatives.
\newblock Mathematical programming \textbf{97}(3), 605--623 (2003)

\bibitem{PowellLFNU}
Powell, M.J.D.: Least frobenius norm updating of quadratic models that satisfy
  interpolation conditions.
\newblock Mathematical Programming \textbf{100}, 183--215 (2004)

\bibitem{Powell2004newuoa}
Powell, M.J.D.: The {NEWUOA} software for unconstrained optimization without
  derivatives.
\newblock Tech. Rep. DAMTP NA2004/08, CMS, University of Cambridge (2004)

\bibitem{Powell2008NEWUOAdev}
Powell, M.J.D.: Developments of {NEWUOA} for minimization without derivatives.
\newblock IMA Journal of Numerical Analysis pp. 649--664 (2008).
\newblock \doi{10.1093/imanum/drm047}

\bibitem{powell2009bobyqa}
Powell, M.J.D.: The {BOBYQA} algorithm for bound constrained optimization
  without derivatives.
\newblock Tech. Rep. DAMTP 2009/NA06, CMS, University of Cambridge (2009)

\bibitem{powell2012beyond}
Powell, M.J.D.: Beyond symmetric {Broyden} for updating quadratic models in
  minimization without derivatives.
\newblock Mathematical Programming pp. 1--26 (2012).
\newblock \doi{10.1007/s10107-011-0510-y}.
\newblock \urlprefix\url{http://dx.doi.org/10.1007/s10107-011-0510-y}

\bibitem{stewart1967modification}
Stewart~III, G.: A modification of davidon's minimization method to accept
  difference approximations of derivatives.
\newblock Journal of the ACM (JACM) \textbf{14}(1), 72--83 (1967)

\bibitem{berghen2005condor}
Vanden~Berghen, F., Bersini, H.: {CONDOR}, a new parallel, constrained
  extension of powell's {UOBYQA} algorithm: Experimental results and comparison
  with the {DFO} algorithm.
\newblock Journal of Computational and Applied Mathematics \textbf{181}(1),
  157--175 (2005)

\bibitem{SMW08MNH}
Wild, S.M.: {MNH:} a derivative-free optimization algorithm using minimal norm
  {Hessians}.
\newblock In: Tenth Copper Mountain Conference on Iterative Methods (2008)

\bibitem{wild2008orbit}
Wild, S.M., Regis, R.G., Shoemaker, C.A.: {ORBIT}: Optimization by radial basis
  function interpolation in trust-regions.
\newblock SIAM Journal on Scientific Computing \textbf{30}(6), 3197--3219
  (2008)

\bibitem{winfield1973function}
Winfield, D.H.: Function minimization by interpolation in a data table.
\newblock IMA Journal of Applied Mathematics \textbf{12}(3), 339 (1973)

\bibitem{wright1996direct}
Wright, M.H.: Direct search methods: Once scorned, now respectable.
\newblock PITMAN RESEARCH NOTES IN MATHEMATICS SERIES pp. 191--208 (1996)

\end{thebibliography}


\end{document}